\newtheorem{theorem}{Theorem}
\newtheorem{lemma}{Lemma}
\newtheorem{proposition}{Proposition}
\newcommand\rvec{{\bf r}}
\newcommand\nvec{{\bf n}}
\newcommand\xvec{{\bf x}}
\newcommand\rhat{{\bf \hat r}}
\newcommand\pvec{{\bf p}}
\newcommand\Qvec{{\bf Q}}
\newcommand\mvec{{\bf m}}
\newcommand\eps{{\epsilon}}
\newcommand{\Rr}{{\mathbb R}}
\title{Front Propagation at the Nematic-Isotropic Transition Temperature}
\author{Apala Majumdar, Paul A. Milewski and Amy Spicer}
\begin{document}
\maketitle
\begin{abstract}
 We study the gradient flow model for the  Landau-de Gennes energy functional for nematic liquid crystals at the nematic-isotropic transition temperature on
prototype geometries. We study the dynamic model on a three-dimensional droplet and on a disc with Dirichlet boundary conditions and different types of initial conditions. In the case of a droplet with radial boundary conditions, a large class of physically relevant initial conditions generate dynamic solutions with a well-defined isotropic-nematic interface which propagates according to mean curvature for small times. On a disc, we make a distinction between ``planar" and ``non-planar" initial conditions and  ``minimal" and ``non-minimal" Dirichlet boundary conditions. Planar initial conditions generate solutions with an isotropic core for all times whereas non-planar initial conditions generate solutions which escape into the third dimension. Non-minimal boundary conditions generate solutions with boundary layers and these solutions can either have a largely ordered interior profile or an almost entirely disordered isotropic interior profile. Our examples suggest that whilst critical points of the Landau-de Gennes energy typically have highly localized disordered-ordered interfaces, the transient dynamics exhibit observable isotropic-nematic interfaces of potential experimental relevance.%
 \end{abstract}

\section{Introduction}
\label{sec:intro}
Nematic liquid crystals are classical examples of mesophases between conventional solid and
liquid phases; they are anisotropic liquids with preferred directions of molecular alignment, these directions being referred to as "directors" in the literature \cite{dg,virga}. In other words, nematic liquid crystals are complex liquids with orientational order. Nematics in confinement are an exciting source of challenges for mathematicians and practical scientists alike. We study dynamically metastable nematic
configurations with interfaces in prototype geometries within the Landau-de Gennes (LdG) theory for nematic liquid crystals. In this framework, the nematic
state is described by the $\Qvec=\left\{Q_{ij}\right\}$-tensor, a symmetric, traceless $3\times 3$ matrix, interpreted as a macroscopic measure
of the nematic order \cite{dg,virga}. The LdG energy functional
comprises a bulk potential, determining nematic
order as a function of temperature, and an elastic energy
density which penalizes spatial inhomogeneities. We work at the
nematic-isotropic transition temperature, where both the isotropic and nematic phases are minimizers of the bulk potential,
and with the one-constant elastic energy density in
the limit of vanishing elastic constant. This limit, describing macroscopic domains (see
\cite{cpaa2010}) with length scales much larger than the nematic correlation length, is studied in detail in the context of energy minimizers in \cite{amaz}.
 
We adopt the gradient flow model to describe the nematodynamics in the absence of fluid flow at a constant temperature. Gradient flows are evolution equations driven by a decreasing energy \cite{gradientflow}. Our model is derived from the LdG energy with the $L^2$-norm as the dissipation mechanism and is described by a system of five coupled
nonlinear parabolic partial differential equations. Standard theory for
parabolic systems shows that we have a unique ``dynamic" time-dependent solution for physically relevant initial and boundary conditions. 
Gradient flows have been used in the context of LC dynamics. For example, in \cite{popa1, popa2} the authors study a one-dimensional gradient flow model and the effects of biaxiality and elastic anisotropy.  In \cite{weiwang}, the authors study isotropic-nematic front propagation using the method of matched asymptotic expansions within the more general Beris-Edwards theory for nematodynamics \cite{sengupta}. In particular, they account for fluid flow and the coupling between fluid flow and nematic order. They derive evolution laws for the velocity field, the director field
 of nematic alignment and the isotropic-nematic interface but without any special attention to the effects of boundary conditions and initial conditions. We work in a simpler dynamical framework with no fluid flow but with focus on how the dynamics is affected by the choice of boundary and initial data.

At the nematic-isotropic transition temperature, the LdG bulk potential bears strong resemblance to the Ginzburg-Landau (GL) potential in superconductivity. In our first model problem we study a three-dimensional droplet with Dirichlet radial boundary conditions.
We use the concept of ``normalized energy" for the GL gradient flow model in  \cite{bronsardkohn, bronsardstoth}, to prove that isotropic-nematic interfaces propagate according to mean curvature  in certain model situations. The long-time dynamics is described by an explicit critical point of the LdG energy - the radial hedgehog (RH) solution \cite{maj2012, gartlandmkaddem2002, henaomajumdar2012}. The RH solution has perfect radial symmetry, with perfect radial nematic alignment and an isolated isotropic point at the centre, referred to as a point defect in the literature
\cite{gartlandmkaddem2002}. We focus on the interplay between initial conditions and transient dynamics followed by convergence to the static RH solution using four different representative initial
conditions.
The transient dynamics has some universal features
which may have experimental repercussions. 
 
In Section~\ref{sec:cylinder}, our second model problem focuses on dynamic solutions on a disc with Dirichlet conditions, subject to two distinct types of initial conditions: planar and
non-planar. Planar $\Qvec = \left\{Q_{ij}\right\}$-tensors, 
$i,j =1,2,3$, have zero $Q_{13}$ and $Q_{23}$ components and non-planar $\Qvec$-tensors do not. Using standard techniques, we prove that planar
initial conditions evolve to planar dynamic solutions which have an isotropic point at the centre of the disc for all times. These solutions develop an isotropic-nematic interface which propagates inwards and is arrested at the origin. Non-planar initial conditions, including small ``non-planar"
perturbations of planar initial conditions, converge to a
universal non-planar profile. The ``small-time" dynamics are
almost indistinguishable from the planar case;
however, the interface collapses at the origin and the dynamic solution escapes into an entirely ordered non-planar state. 
We track the transient dynamics and numerically compute quantitative estimates for the ``persistence time" of the interface. 

These numerical results are complemented by some analysis for radially symmetric planar critical points of the LdG energy that have been reported in \cite{fritta} for low temperatures. We generalize some of the results in \cite{fritta} to the nematic-isotropic transition temperature and use these critical points to construct radially symmetric and non-symmetric initial conditions for the numerical simulations. The different types of initial conditions suggest that the transient dynamics have universal features independent of the symmetry or uniaxiality/biaxiality of the initial condition.  Namely, in all cases, we have a well-defined isotropic-nematic interface as a pronounced feature of the evolution trajectory, which we illustrate by the eigenvalue evolution of the corresponding LdG $\Qvec$-tensor and plots of $|\Qvec|^2$ as a function of time.


The first two model problems have minimal boundary conditions which are minimizers of the LdG bulk potential. In the last section, we study two-dimensional (2D) and three-dimensional (3D) LdG dynamic solutions on a disc with non-minimal boundary conditions. The two-dimensional case can be easily understood and all dynamic solutions exhibit a rapidly growing 
isotropic core with a thin boundary layer near the lateral surface. The three-dimensional solution landscape is richer and the transient dynamics is sensitive to the initial condition. If the initial condition is planar with an isotropic-nematic interface relatively close to the centre of the disc, then the interface propagates towards the centre, replicating the planar dynamics in Section~\ref{sec:cylinder}. If the planar initial condition has an isotropic-nematic interface relatively close to the boundary of the disc, the interface propagates outwards, yielding an almost entirely isotropic interior and replicating the two-dimensional dynamics. In all cases, we have a boundary layer to match the fixed non-minimal Dirichlet condition. Here, the transient dynamics is sensitive to the initial interface location, a feature which is missing in the model problems with minimal Dirichlet boundary conditions.

The paper is organized as follows. In Section~\ref{sec:prelim}, we
present the gradient flow model for the Landau-de Gennes energy.
In Section~\ref{sec:RH}, we study dynamic solutions on a
droplet with Dirichlet radial conditions.
Section~\ref{sec:cylinder} follows with emphasis on planar and non-planar initial
conditions on a disc and Section~\ref{sec:2D} illustrates the diverse possibilities with non-minimal boundary conditions. We conclude in
Section~\ref{sec:conclusions} with future perspectives.

\section{Preliminaries}
\label{sec:prelim}

The LdG $\Qvec$-tensor order parameter is in the space of symmetric traceless $3\times 3$ matrices, $S_0 =\left\{ \Qvec\in \mathbb{M}^{3\times 3}:
Q_{ij} = Q_{ji}, Q_{ii} = 0 \right\}$. A $\Qvec$-tensor is
said to be (i) isotropic if $\Qvec=0$, (ii) uniaxial if $\Qvec$
has a pair of degenerate non-zero eigenvalues and (iii) biaxial if
$\Qvec$ has three distinct eigenvalues \cite{dg,newtonmottram}. A
uniaxial $\Qvec$-tensor can be written as $\Qvec_u = s \left(\nvec \otimes \nvec -\mathrm{I}/3\right)$ with $s \in \Rr$ and $\nvec\in
S^2$, a unit vector. The scalar, $s$, is an order parameter which measures the degree of orientational order. The vector, $\nvec$, is
referred to as the ``director" and labels the single
distinguished direction of uniaxial nematic alignment \cite{virga,dg}. 


We work with a simple form of the LdG energy given by
\begin{equation}
\label{eq:2} I[\Qvec] =  \int_{\Omega} \frac{L}{2} \left| \nabla
\Qvec \right|^2 + f_B(\Qvec)~ \mathrm{d}V
\end{equation}

where
\begin{eqnarray}
\label{eq:3}
|\nabla \Qvec |^2 = \frac{\partial \Qvec_{ij}}{\partial \xvec_k}\frac{\partial \Qvec_{ij}}{\partial \xvec_k}, \quad f_B(\Qvec) = \frac{A}{2} \textrm{tr}\Qvec^2 - \frac{B}{3}
\textrm{tr}\Qvec^3 + \frac{C}{4}\left(\textrm{tr}\Qvec^2 \right)^2.
\end{eqnarray}

The variable $A = \alpha (T - T^*)$ is the re-scaled temperature, $\alpha, L, B, C>0$ are material-dependent constants and $T^*$ is the characteristic nematic supercooling temperature \cite{dg,newtonmottram}.
Further $\textrm{tr}\Qvec^2 = Q_{ij}Q_{ij}$ and
$\textrm{tr}\Qvec^3 = Q_{ij} Q_{jk}Q_{ki}$ for $i,j,k=1,2,3$. It is well-known that all stationary points of the thermotropic
potential, $f_B$, are either uniaxial or isotropic \cite{dg,newtonmottram,ejam2010}.
The re-scaled temperature $A$ has three characteristic values: (i) $A=0$, below which the isotropic phase $\Qvec=0$ loses stability, (ii) the nematic-isotropic transition temperature,
$A={B^2}/{27 C}$, at which $f_B$ is minimized by the isotropic
phase and a continuum of uniaxial states with $s=s_+={B}/{3C}$ and
$\nvec $ arbitrary, and (iii) the nematic supercooling temperature, $A = {B^2}/{24 C}$, above which the ordered nematic equilibria do not exist.

Throughout this paper we work at the nematic-isotropic transition temperature, investigating the propagation of fronts
separating the isotropic phase from the ordered equilibria 
in the limit, ${L C}/{R^2 B^2} \to
0^+$, where $R$ is a characteristic length scale of the domain $\Omega$. We refer to this as the \emph{vanishing elastic constant limit} for fixed values of $R, B, C$, by analogy with the terminology in \cite{amaz}.
Continuum formulations are typically valid in this limit \cite{maj2012, cpaa2010}.
We work with the gradient flow
model associated with the LdG energy \cite{pre2012} and the dynamic equations are given by:
\begin{equation}
\label{eq:6}\gamma \Qvec_t = L \Delta \Qvec - A\Qvec + B\left(\Qvec
\Qvec - \frac{\mathbf{I}}{3}|\Qvec|^2 \right)  - C|\Qvec|^2 \Qvec,
\end{equation}

where $\gamma$ is a positive rotational viscosity, $\Qvec \Qvec = \Qvec_{ij}\Qvec_{jk}$ with $i,j,k=1,2,3$
and $\mathbf{I}$ is the $3\times 3$ identity matrix. The system
(\ref{eq:6}) comprises five coupled nonlinear parabolic partial
differential equations. We recall a basic result about the
existence and uniqueness of solutions for such gradient flow systems:

\begin{proposition}
\label{prop:1} Let $\Omega \subset \Rr^3$ be a bounded domain with
smooth boundary, $\partial \Omega$. Given a smooth fixed boundary 
condition $\Qvec(\xvec,t) = \Qvec_b(r)$ on $\partial \Omega$ and
smooth initial condition, $\Qvec(\xvec,0) = \Qvec_0(\xvec)$, the
parabolic system (\ref{eq:6}) has a unique solution,
$\Qvec(\xvec,t) \in C^{\infty}(\Omega)$ for all $t>0$.
\end{proposition}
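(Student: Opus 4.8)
The plan is to regard the system (\ref{eq:6}) as a semilinear parabolic system whose leading part is the (matrix) heat operator $L\Delta$ and whose nonlinearity is polynomial, and to combine a standard short-time existence theory with a maximum-principle a priori bound that upgrades the local solution to a global smooth one. First I would dispose of the inhomogeneous boundary data by subtracting a smooth time-independent extension of $\Qvec_b$ into $\Omega$, reducing to homogeneous Dirichlet conditions at the cost of extra smooth lower-order terms. Writing the resulting problem in mild form using the analytic semigroup generated by $L\Delta$ on an interpolation/fractional-power space $X$ (e.g. $X=H^s(\Omega;S_0)$ with $s>3/2$, which is a Banach algebra in dimension three, or a Hölder space), the map $\Qvec \mapsto -A\Qvec + B(\Qvec\Qvec - \tfrac{\mathbf I}{3}|\Qvec|^2) - C|\Qvec|^2\Qvec$ is smooth and locally Lipschitz on bounded subsets of $X$. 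A contraction-mapping argument on $C([0,\tau];X)$ for small $\tau>0$ then yields a unique local mild solution, which parabolic smoothing immediately makes classical; uniqueness on the common interval of existence follows from a Gronwall estimate on the difference of two solutions.

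The substance of the argument is the a priori bound that rules out finite-time blow-up of the cubic nonlinearity. Set $u = |\Qvec|^2 = \textrm{tr}\,\Qvec^2$ and take the Frobenius inner product $\langle\cdot,\cdot\rangle$ of (\ref{eq:6}) with $\Qvec$. Using the identities $\langle\Qvec,\Delta\Qvec\rangle = \tfrac12\Delta u - |\nabla\Qvec|^2$, the fact that $\langle\Qvec,\Qvec\Qvec - \tfrac{\mathbf I}{3}|\Qvec|^2\rangle = \textrm{tr}\,\Qvec^3$ (since $\textrm{tr}\,\Qvec = 0$), and $\langle\Qvec,|\Qvec|^2\Qvec\rangle = u^2$, I obtain
\[
\gamma\,\partial_t u = L\Delta u - 2L|\nabla\Qvec|^2 - 2Au + 2B\,\textrm{tr}\,\Qvec^3 - 2Cu^2 .
\]
Discarding the nonpositive gradient term and invoking the sharp elementary inequality $|\textrm{tr}\,\Qvec^3| \le \tfrac{1}{\sqrt 6}\,u^{3/2}$ valid for symmetric traceless $\Qvec$, this yields the scalar differential inequality $\gamma\,\partial_t u \le L\Delta u + g(u)$ with $g(u) = -2Au + \tfrac{2B}{\sqrt 6}\,u^{3/2} - 2Cu^2$.

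Because the quartic term forces $g(u) < 0$ once $u$ exceeds some threshold $M_0 = M_0(A,B,C)$, the constant $M^{*} := \max\{M_0,\ \sup_{\partial\Omega}|\Qvec_b|^2,\ \sup_{\Omega}|\Qvec_0|^2\}$ is a supersolution of the above inequality and dominates $u$ on the parabolic boundary. The parabolic comparison principle (applied to $v = u - M^{*}$, whose source is controlled by the local Lipschitz constant of $g$ on the relevant range) then gives $u \le M^{*}$ for as long as the solution exists. This is a uniform $L^\infty$ bound, and it shows the region $\{|\Qvec|^2 \le M^{*}\}$ is invariant under the flow.

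With this bound the nonlinearity is effectively globally Lipschitz on the invariant region, so the local $X$-norm cannot blow up in finite time and the solution extends to $t\in(0,\infty)$. Higher regularity for $t>0$ is then routine: the right-hand side of (\ref{eq:6}) is bounded in $L^\infty$, so interior $L^p$ and Schauder estimates for linear parabolic systems, bootstrapped against the smoothness of $f_B$ and of $\Qvec_b$, give $\Qvec(\cdot,t)\in C^\infty(\Omega)$ for every $t>0$; note that parabolic smoothing delivers this interior-in-time regularity without needing compatibility of $\Qvec_0$ and $\Qvec_b$ at $t=0$. I expect the main obstacle to be the a priori estimate: everything hinges on exploiting the algebraic structure of the Landau--de Gennes potential, namely that the coercive quartic term $-C|\Qvec|^2\Qvec$ dominates the cubic term $B\,\textrm{tr}\,\Qvec^3$ for large $|\Qvec|$, which is precisely what makes $\{u\le M^{*}\}$ invariant and prevents blow-up.
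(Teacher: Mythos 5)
Your proposal is correct and takes essentially the same route as the paper: standard parabolic theory for existence, an a priori $L^\infty$ bound coming from the coercivity of the quartic term in $f_B$, and uniqueness via Gronwall's inequality applied to the difference of two solutions. The only difference is one of self-containedness --- the paper cites the bound $|\Qvec| \leq \sqrt{2/3}\,B/3C$ from the literature while you derive it via the maximum principle, and indeed the largest root of your threshold equation $-A + (B/\sqrt{6})\sqrt{u} - Cu = 0$ at $A = B^2/27C$ is exactly $u = \frac{2}{3}\left(B/3C\right)^2$, so your argument recovers precisely the constant the paper quotes.
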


\begin{proof}
The existence of a solution is standard; see \cite{existence} for a proof. From \cite{MajumdarLiquidCrystal, amaz}, we
have the dynamic solution is bounded for all times with
$\left| \Qvec \left(\rvec, t \right) \right|
 \leq \sqrt{2/3}\,B/3C$ for $t \geq 0$. The uniqueness result follows from an immediate application of
Gronwall's inequality to the difference $\Qvec_d = \Qvec_1 - \Qvec_2$ of two solutions, $\Qvec_1$ and $\Qvec_2$,
subject to the fixed boundary condition and the same initial
condition. In particular, $\Qvec_d(\xvec,t) = 0$ on $\partial
\Omega$ and $\Qvec_d(\xvec, 0) = 0$ for $\xvec \in \Omega$. One
can then show that $\Qvec_d(\xvec,t) = 0$ for $\xvec\in\Omega$ and
for all $t>0$. 
\end{proof}

\section{Front propagation on three-dimensional spherical
droplets}
\label{sec:RH}
Our first example concerns nematic droplets. Let $\Omega$ be the unit ball in three dimensions;
$\Omega:=\left\{ \xvec \in \Rr^3;|\xvec| \leq 1 \right\}$. We work with a uniaxial Dirichlet boundary condition 
\begin{equation}
\label{Qb}
\Qvec_b = \frac{B}{3C}\left(\rhat\otimes \rhat -
\frac{\mathbf{I}}{3} \right),
\end{equation}

where $\rhat$ is the 3D radial unit vector. $\Qvec_b$ is a minimizer of the bulk potential $f_B$
in (\ref{eq:3}). For illustration, we first work with uniaxial radial initial conditions
that have a front structure such as
\begin{equation}
\label{eq:IC}
 \Qvec(\xvec,0) =
\begin{cases} \mathbf{0} & 0< |\xvec|< r_0
\\ \Qvec_b & r_0<|\xvec| \leq 1,
\end{cases}
\end{equation} 

for some $\frac{1}{2}\leq r_0 <1$. We refer to these as
``radial hedgehog'' type initial conditions by analogy with the static radial hedgehog solution as described in Section~\ref{sec:intro} \cite{maj2012,gartlandmkaddem2002,henaomajumdar2012}.
We are interested in the qualitative properties of dynamic
solutions of (\ref{eq:6}) subject to these initial and boundary conditions, such as front  propagation and transient dynamics. Looking for ``dynamic" radial hedgehog type solutions,
we work with an ansatz of the form
\begin{equation}
\label{eq:7} \Qvec(\rvec,t)= h(r,t)\left(\rhat\otimes \rhat - \frac{\mathbf{I}}{3}
\right),
\end{equation}

where $h:\left[0, 1 \right]\times \left[0,\infty \right) \to \Rr$
is the scalar order parameter that only depends on $r$,
the radial distance from the origin, and time. On substitution into (\ref{eq:6}), we have a solution of the form (\ref{eq:7}) if the scalar order parameter $h$ is a solution of
\begin{eqnarray}
\label{eq:8} \gamma h_t = h_{rr}+ \frac{2}{r} h_r - \frac{6h}{r^2} +
\frac{3}{\bar{L}}h \left(h_+ - h \right)\left( 2h - h_+ \right),
\end{eqnarray} 
with $h_+ = \frac{B}{3C}$ and $\bar{L} = \frac{9 L}{C}$. The boundary conditions are
$h(0,t) = 0$, $h(1,t) = h_+$ for all $t\geq 0$ and the initial condition is
\begin{equation}
\label{eq:10} h(r, 0) = \begin{cases} 0 & 0\leq r < r_0 \\
h_+ & r_0 < r< 1.
\end{cases}
\end{equation} The evolution equation for $h$ in
(\ref{eq:8}) is simply the gradient flow model
associated with the energy functional
\begin{equation}
\label{eq:11} \frac{I [h]}{4 \pi \sqrt{\bar{L}}} = \int_{0}^{1}
r^2 \sqrt{\bar{L}}\left\{ \frac{1}{3} \left(\frac{dh}{dr}\right)^2
+ \frac{2h^2}{r^2} + \frac{h^2 \left( h - h_+^2
\right)^2}{\sqrt{\bar{L}}} \right\}~ \mathrm{d}r.
\end{equation}

Given a smooth solution $h(r,t) \in C^{\infty}([0,1]\times[0,\infty))$, we can appeal to the uniqueness result in Proposition~\ref{prop:1} to deduce that (\ref{eq:7}) is the physically relevant solution
and hence, the five-dimensional evolution problem in (\ref{eq:6}) reduces to a single evolution equation for a scalar order parameter.

In \cite{bronsardkohn}, the authors study a closely related problem for front propagation on three-dimensional balls in the Ginzburg-Landau framework. They rigorously prove that for suitably defined initial conditions (as in (\ref{eq:10})) with appropriately bounded energy, the front propagates according to mean curvature. Our governing equation (\ref{eq:8}) is similar to that studied in Section~$3$ of \cite{bronsardkohn}, however we have an extra term: $-{6h}/{r^2}$ in (\ref{eq:8}).  In particular, we cannot quote results from \cite{bronsardkohn} and \cite{bronsardstoth} without verifying that the key inequalities are unchanged by the additional term for $\bar{L}$ sufficiently small. In the next paragraphs, we verify the necessary details to reach the desired conclusion.
Let $\rho(t)$ be the solution to
\begin{eqnarray}
\label{eq:12} \frac{d \rho}{dt} = - \frac{2}{\rho}, \quad \rho(0) =
r_0 \in \left(1/2, 1 \right),
\end{eqnarray} 
or alternatively, $\rho(t) = \sqrt{r_0^2 - 4t }$. We define $ T_1 = \frac{1}{4}\left( r_0^2 - \frac{1}{4} \right)$. This is the first time for which $\rho(t) = {1}/{2}$ and is independent of $\bar{L}$. Next, let 
\begin{eqnarray}
\label{eq:14} f(r,t) = \begin{cases} 0  & 0\leq r \leq \rho(t) \\
h_+ &\rho(t) \leq r \leq 1. \end{cases}
\end{eqnarray}
Our goal is to show that the solution $h(r,t)$ of (\ref{eq:8}), subject to suitably defined initial conditions, resembles the function $f(r,t)$ for $T < T_1$, in the sense that %
\begin{equation}
\label{eq:15} \int_{0}^{1} r^2 \left| h(r, t) - f(r,t) \right|~ \mathrm{d}r
\to 0 ~\textrm{as $\bar{L} \to 0$}.
\end{equation} 

As in \cite{bronsardkohn}, the key step is to define a weighted energy as shown below:
\begin{eqnarray}
E_\phi [w](\tau) = \int_{-\rho(\tau)}^{1 -
\rho(\tau)} \phi(R, \tau) \left\{ \sqrt{\bar{L}}\left(
\frac{w_R^2}{3} + \frac{ 2 w^2}{(R + \rho)^2} \right) + \frac{w^2
\left(h_+ - w \right)^2}{\sqrt{\bar{L}}} \right\}~\mathrm{d}R, \nonumber
\end{eqnarray}
where
\begin{equation}
\label{eq:17} w(R, \tau) = h(R + \rho(t), t ), \quad -\rho(\tau)
\leq R \leq 1 - \rho(\tau); \quad \tau\geq 0,
\end{equation} 

and $\phi(R, \tau)$ is a weight function
\begin{equation}
\label{eq:20} 
\phi( R, \tau) = \textrm{exp}\left[ - \frac{
2R}{\rho} \right] \left( 1 + \frac{R}{\rho} \right)^2.
\end{equation} 

In particular, $w\left(R, \tau \right)$ is a solution of
\begin{eqnarray}
\label{eq:23} \sqrt{\bar{L}}w_\tau - \frac{\sqrt{\bar{L}}}{\phi}
\left( \phi w_R \right)_R + \frac{ 6 w  \sqrt{  \bar{L}}}{(R +
\rho )^2} + \frac{3}{\sqrt{\bar{L}}}w(h_+ - w)(2w - h_+) = 0.
\end{eqnarray} 

We follow the steps in Proposition $3.2$ of \cite{bronsardkohn} to show that
\begin{equation}
\label{eq:29}
\frac{d}{d\tau} E_\phi [w](\tau)  \leq -\frac{2}{3}\frac{\sqrt{\bar{L}}}{\gamma}\int_{-\rho(\tau)}^{1 - \rho(\tau)} \phi(R,\tau) w_\tau^2~ \mathrm{d}R + \frac{ 4 \phi}{\rho}\bar{L}^{1/2} h_+^2,
\end{equation}
for $\tau \leq T_1$.
By contrast, in \cite{bronsardkohn}, the weighted energy in the GL-framework is strictly decreasing. We have lesser control on the weighted energy  but for $\bar{L}$ sufficiently small, the inequality (\ref{eq:29}) suffices for our purposes.
 
 Next, we define an interface energy which yields lower bounds for the weighted energy:
  \begin{equation}
 \label{eq:31}
 g(s) = \frac{2}{\sqrt{3}}
 \int_{0}^{s} w  \left( h_+ - w \right)~\mathrm{d}w,
 \end{equation}
and so $g(h_+) = \frac{h_+^3}{3 \sqrt{3}}$ is an \emph{interface energy} associated with an isotropic-nematic front. Further, let
\begin{eqnarray}
 \label{eq:32}
 v(R) = \begin{cases} 0 &-\rho(\tau) \leq R < 0 \\
 h_+ & 0  < R < 1 - \rho(\tau). \end{cases}
 \end{eqnarray}
 We can adapt a lemma in \cite{bronsardstoth} to show that:\\
 
\begin{proposition}
\label{prop:lb}
If for some smooth function $w$,
\begin{eqnarray}
  \label{eq:33}
 \int_{-a}^{a} \left| g(w) - g (v) \right|~\mathrm{d}s \leq \frac{g(h_+)}{4}\bar{L}^{\alpha} \quad
 \textrm{and} \quad E_\phi [w] \leq C_1 ,
 \end{eqnarray}
  where $0<\alpha<{1}/{4}$ and $a  =\rho(T_1)/2\sqrt{2}$, then
 $E_\phi[w](\tau) \geq g(h_+) - C_2 \bar{L}^{1/2-\alpha} - C_3 \bar{L}^{2\alpha}$
 for $\tau\leq T_1$ and positive constants $C_1, C_2, C_3$ independent of $\bar{L}$.
\end{proposition}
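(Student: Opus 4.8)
The plan is to adapt the Modica--Mortola style lower bound of \cite{bronsardstoth}, verifying at each step that the new term $\frac{2w^2}{(R+\rho)^2}$ and the non-constant weight $\phi$ do not spoil the estimate. The starting point is a pointwise lower bound on the energy density. First I would simply discard the middle term in $E_\phi[w]$: since $\phi>0$ and $\frac{2w^2}{(R+\rho)^2}\ge 0$, this term only increases the energy and may be dropped when seeking a lower bound. This is precisely where the extra term $-6h/r^2$ in (\ref{eq:8}) is seen to be harmless, as it enters the energy with a favourable sign. On the two surviving terms I would apply Young's inequality,
\[
\sqrt{\bar L}\,\frac{w_R^2}{3} + \frac{w^2(h_+-w)^2}{\sqrt{\bar L}} \;\ge\; \frac{2}{\sqrt 3}\,w(h_+-w)\,|w_R| \;=\; \Big| \tfrac{d}{dR}\, g(w) \Big|,
\]
with $g$ the interface energy in (\ref{eq:31}) and $0\le w\le h_+$ by the bound of Proposition~\ref{prop:1}, so that $E_\phi[w] \ge \int_{-\rho}^{1-\rho}\phi\,|(g(w))_R|\,\mathrm{d}R$. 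The problem is thereby reduced to bounding this weighted total variation of $g(w)$ below by $g(h_+)$ up to the stated errors.

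Next I would extract two independent pieces of localization. From $E_\phi[w]\le C_1$, together with the fact that $\phi$ is bounded below by a positive constant on the fixed interval $[-a,a]$ (here one uses $\rho(\tau)\ge\rho(T_1)=1/2$ for $\tau\le T_1$), the potential term yields $\int w^2(h_+-w)^2\,\mathrm{d}R=O(\sqrt{\bar L})$, so the transition set $\{R:\eta\le w\le h_+-\eta\}$ has measure $O(\sqrt{\bar L}/\eta^2)$ for any threshold $\eta$; this controls the \emph{width} of the interface. Independently, hypothesis (\ref{eq:33}) controls its \emph{location}: since $g(w)$ is $L^1$-close to the step $g(v)$ within $\frac{g(h_+)}{4}\bar L^\alpha$, a first-crossing/Chebyshev argument forces $g(w)$ to pass a fixed fraction of $g(h_+)$ within distance $O(\bar L^\alpha)$ of $R=0$. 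Combining the two, I would produce points $R_1<0<R_2$ with $|R_1|,|R_2|\le C(\bar L^\alpha+\sqrt{\bar L}/\eta^2)$ while $g(w(R_1))\le g(\eta)$ and $g(w(R_2))\ge g(h_+-\eta)$.

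I would then conclude from
\[
E_\phi[w] \;\ge\; \Big(\min_{[R_1,R_2]}\phi\Big)\,\big(g(w(R_2))-g(w(R_1))\big),
\]
using $\int_{R_1}^{R_2}|(g(w))_R|\,\mathrm{d}R\ge g(w(R_2))-g(w(R_1))$. Because $\phi(0,\tau)=1$ and $\partial_R\phi(0,\tau)=0$ by (\ref{eq:20}), so that $\phi=1-O(R^2/\rho^2)$ near the interface, the localization of $R_1,R_2$ gives $\min_{[R_1,R_2]}\phi\ge 1-O(\bar L^{2\alpha}+\bar L/\eta^4)$; and since $g'$ vanishes at both $0$ and $h_+$, the value gap obeys $g(w(R_2))-g(w(R_1))\ge g(h_+)-O(\eta^2)$. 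The decisive step is to choose $\eta=\bar L^{1/4-\alpha/2}$, which balances the value error $O(\eta^2)=O(\bar L^{1/2-\alpha})$ against the width contribution $O(\bar L/\eta^4)=O(\bar L^{2\alpha})$, leaving $E_\phi[w]\ge g(h_+)-C_2\bar L^{1/2-\alpha}-C_3\bar L^{2\alpha}$, as claimed.

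I expect the main obstacle to be the two-scale bookkeeping in the middle step rather than any single inequality: one must simultaneously track the interface width $\sqrt{\bar L}/\eta^2$ coming from the energy bound and the interface location $\bar L^\alpha$ coming from (\ref{eq:33}), and check that the quadratic flatness of $\phi$ at $R=0$ genuinely converts these two length scales into the advertised exponents. The remaining point requiring care is that $C_1$, and hence the uniform lower bound on $\phi$, can be taken independent of $\bar L$ throughout $\tau\le T_1$, which is exactly what forces $C_1,C_2,C_3$ to be $\bar L$-independent and is where the restriction $\tau\le T_1$ (equivalently $\rho\ge 1/2$) is used.
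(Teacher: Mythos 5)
Your argument is correct and is essentially the paper's own: the paper offers no details beyond asserting that a lemma of \cite{bronsardstoth} can be adapted, and your Modica--Mortola lower bound $E_\phi[w]\ge\int\phi\,\left|(g(w))_R\right|\mathrm{d}R$, the two-scale localization (interface width $O(\sqrt{\bar{L}}/\eta^2)$ from the energy bound, interface location $O(\bar{L}^{\alpha})$ from (\ref{eq:33})), and the quadratic flatness of $\phi$ at $R=0$ (valid since $\rho\ge 1/2$ for $\tau\le T_1$) constitute precisely that adaptation---indeed your balancing choice $\eta=\bar{L}^{1/4-\alpha/2}$ reproduces exactly the two error exponents $\bar{L}^{1/2-\alpha}$ and $\bar{L}^{2\alpha}$ appearing in the statement. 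The only point to shore up is your appeal to Proposition~\ref{prop:1} for $0\le w\le h_+$, which strictly yields only $|w|\le h_+$; either invoke a comparison principle for (\ref{eq:8}) to get $w\ge 0$, or note that if $w(R_1)<0$ then $w$ vanishes somewhere in $(R_1,R_2)$ and, since $g\ge 0$ there, the total-variation estimate $\int_{R_1}^{R_2}\left|(g(w))_R\right|\mathrm{d}R\ge g(w(R_2))$ still gives the required bound.
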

It remains to construct initial conditions $w(R,0)$, which satisfy the hypothesis of Proposition~\ref{prop:lb}. The construction is parallel to that in Equation $(1.22)$ of \cite{sternberg} and we give a statement for completeness.\\
\begin{proposition}
\label{prop:IC}
Define the function
\begin{equation}
\label{eq:sigma}
\sigma(R) = \frac{h_+}{1 + \exp[ -\sqrt{3}h_+ R ]}.
\end{equation} At $\tau=0$, $R= r- r_0$ and for $\bar{L}$ sufficiently small, define
\begin{equation}
\label{eq:IC1}
w\left( R \right) = \begin{cases} h_+ & R > 2 \bar{L}^{1/4}\\[10pt]
\left(h_+ - \sigma\left(\frac{1}{\bar{L}^{1/4}}\right)\right) \left( R - 2 \bar{L}^{1/4} \right) /\bar{L}^{1/4}+ h_+ & \bar{L}^{1/4}\leq R \leq 2\bar{L}^{1/4} \\[12pt]
  \sigma\left(\frac{R}{\sqrt{\bar{L}}}\right) & -\bar{L}^{1/4} \leq R \leq \bar{L}^{1/4} \\[10pt]
\sigma\left(-\frac{1}{\bar{L}^{1/4}}\right)\left( R + 2 \bar{L}^{1/4} \right)
/{\bar{L}^{1/4}} & -2\bar{L}^{1/4} \leq R \leq -\bar{L}^{1/4}\\[12pt]
0 & R < -2\bar{L}^{1/4}.
\end{cases}
\end{equation}
Then
$E_\phi[w] \leq g(h_+) + C \bar{L}^{1/4}$
for a positive constant $C$ independent of $\bar{L}$, as $\bar{L}\to 0$.
\end{proposition}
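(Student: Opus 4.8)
The plan is to exploit the quasi-one-dimensional structure of the construction (\ref{eq:IC1}): I split the weighted energy $E_\phi[w](0)$ into the five $R$-intervals on which $w$ is prescribed piecewise, and show that only the inner ``core'' interval $[-\bar{L}^{1/4},\bar{L}^{1/4}]$, carrying the rescaled optimal profile $\sigma(R/\sqrt{\bar{L}})$, contributes at leading order, where it reproduces the interface energy $g(h_+)$, while every remaining interval contributes a quantity that vanishes as $\bar{L}\to 0$. Throughout I use that at $\tau=0$ one has $\rho=r_0>1/2$, $R+\rho=r$, and $\phi(0,0)=1$ with $\phi(R,0)=1+O(R)$ near $R=0$.

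First I would treat the core. The substitution $\xi=R/\sqrt{\bar{L}}$ maps $[-\bar{L}^{1/4},\bar{L}^{1/4}]$ onto $[-\bar{L}^{-1/4},\bar{L}^{-1/4}]$ and, together with $dR=\sqrt{\bar{L}}\,d\xi$, converts the gradient-plus-potential densities into $\tfrac{1}{3}\sigma'(\xi)^2+\sigma(\xi)^2(h_+-\sigma(\xi))^2$. The algebraic key, checked by differentiating (\ref{eq:sigma}) directly, is the equipartition identity $\sigma'(\xi)=\sqrt{3}\,\sigma(\xi)(h_+-\sigma(\xi))$; this equates the two densities and, with $g$ from (\ref{eq:31}), identifies their sum with the total derivative $\tfrac{d}{d\xi}g(\sigma(\xi))$. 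Hence the unweighted core integral telescopes to $g(\sigma(\bar{L}^{-1/4}))-g(\sigma(-\bar{L}^{-1/4}))$, which differs from $g(h_+)-g(0)=g(h_+)$ by terms exponentially small in $\bar{L}^{-1/4}$, since $\sigma$ reaches $h_+$ and $0$ at an exponential rate. Reinstating the weight costs only $O(\bar{L}^{1/4})$: since $|R|\leq\bar{L}^{1/4}$ on the core, the Lipschitz bound gives $\phi(R,0)=1+O(\bar{L}^{1/4})$ there, so replacing $\phi$ by $1$ changes the integral by at most $O(\bar{L}^{1/4})\,g(h_+)$. Finally the curvature term $\sqrt{\bar{L}}\,2w^2/(R+\rho)^2$ is bounded on the core (where $R+\rho$ stays away from $0$) and, integrated over width $2\bar{L}^{1/4}$, is $O(\bar{L}^{3/4})$. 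Thus the core yields $g(h_+)+O(\bar{L}^{1/4})$.

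Next I would dispatch the four outer intervals. On $R<-2\bar{L}^{1/4}$ one has $w\equiv0$, so the contribution is zero; on $R>2\bar{L}^{1/4}$ one has $w\equiv h_+$, so the gradient and potential densities vanish identically and only the curvature term survives, contributing $\sqrt{\bar{L}}\int 2h_+^2/(R+\rho)^2\,dR=O(\sqrt{\bar{L}})$ because $R+\rho\geq r_0$ there. On the two thin linear layers $\bar{L}^{1/4}\leq|R|\leq2\bar{L}^{1/4}$, the endpoint values $\sigma(\pm\bar{L}^{-1/4})$ are exponentially close to $h_+$ and $0$, so $w$ remains within an exponentially small distance of the constant it interpolates to and $w_R$ is exponentially small; hence the gradient, potential and curvature densities each integrate to an exponentially small amount. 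Adding all five contributions gives $E_\phi[w]\leq g(h_+)+C\bar{L}^{1/4}$, the claimed bound, the exponent $\tfrac14$ originating from the Lipschitz variation of $\phi$ across a core of width $\bar{L}^{1/4}$ (a sharper analysis would yield $O(\sqrt{\bar{L}})$, but this suffices and dovetails with the range $0<\alpha<1/4$ in Proposition~\ref{prop:lb}).

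The work here is bookkeeping rather than a deep obstacle: the two facts that make it go through are the equipartition identity that collapses the core density to a total derivative, and the exponential approach of $\sigma(\pm\bar{L}^{-1/4})$ to $h_+$ and $0$, which renders the matching layers harmless. The only genuinely new feature relative to the flat Ginzburg--Landau construction of \cite{sternberg} is the curvature term $2w^2/(R+\rho)^2$; the plan disposes of it by observing that $R+\rho$ is bounded away from zero on the support of $w$ (as $\rho=r_0>1/2$), so that its $\sqrt{\bar{L}}$ prefactor confines it to a lower-order correction on every interval and it never disturbs the leading value $g(h_+)$.
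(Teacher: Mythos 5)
Your proof is correct and is essentially the argument the paper itself relies on: the paper gives no details, deferring to the construction in Equation (1.22) of \cite{sternberg}, and your five-interval decomposition with the equipartition identity $\sigma' = \sqrt{3}\,\sigma(h_+ - \sigma)$ (which telescopes the core integrand to $\frac{d}{d\xi}g(\sigma(\xi))$) is exactly that standard verification, correctly extended by the observation that the new curvature term $2w^2/(R+\rho)^2$ is harmless since $R+\rho$ is bounded below and the term carries a $\sqrt{\bar{L}}$ prefactor. One harmless slip: on the matching layer $\bar{L}^{1/4}\le R\le 2\bar{L}^{1/4}$, where $w$ is near $h_+$, the curvature density is $O(1)$ rather than exponentially small, so that layer contributes $O(\bar{L}^{3/4})$ — still well within the claimed $C\bar{L}^{1/4}$.
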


Finally, we have by analogy with the main theorem in \cite{bronsardstoth}:
\\
\begin{proposition}
\label{prop:3}
Let $0< \alpha \leq \frac{1}{4}$ and assume that $E_\phi[w](0) \leq g(h_+) + c_1 \bar{L}^{2\alpha}$ for some constant $c_1>0$ independent of $\bar{L}$ and that for $a$ as above,
\begin{equation}
\label{eq:55}
\int_{-a}^{a} \left| g\left(w(R, 0) \right) - g(v) \right|~\mathrm{d}R < \frac{g(h_+)}{8} \bar{L}^{\alpha}.
\end{equation}
Let $T_\eps$ be the first time for which
\begin{equation}
\label{eq:56}
\int_{-a}^{a} \left| g(w(R, T_\eps)) - g(w(R,0)) \right|\,\mathrm{d}R = \frac{g(h_+)}{8}\bar{L}^{\alpha}.
\end{equation}
Then $T_\eps \geq \min\left(T_1, C \right)$ for some positive constant $C$ independent of $\bar{L}$ as $\bar{L} \to 0^+$. In other words, we have
\begin{equation}
\label{eq:57}
\int_{-a}^{a} \left| g(w(R, \tau )) - g(v) \right|\,\mathrm{d}R < \frac{g(h_+)}{4}\bar{L}^{\alpha}
\end{equation} 
for all $\tau < T_\eps$ and $T_\eps$ is of order one.
\end{proposition}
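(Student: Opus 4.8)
The plan is to set up a continuity/bootstrap argument on the time interval $[0, T_\eps]$, combining the energy dissipation inequality~(\ref{eq:29}) with the lower bound from Proposition~\ref{prop:lb}. The structure mirrors the main theorem of \cite{bronsardstoth}, so I would first verify that the hypotheses of Proposition~\ref{prop:lb} hold \emph{throughout} $[0, T_\eps]$, not merely at $\tau = 0$. By the definition of $T_\eps$ in~(\ref{eq:56}) and the triangle inequality applied to the assumed initial bound~(\ref{eq:55}), for every $\tau < T_\eps$ we have
\begin{equation*}
\int_{-a}^{a} \left| g(w(R,\tau)) - g(v) \right|\,\mathrm{d}R < \frac{g(h_+)}{8}\bar{L}^{\alpha} + \frac{g(h_+)}{8}\bar{L}^{\alpha} = \frac{g(h_+)}{4}\bar{L}^{\alpha},
\end{equation*}
which is exactly~(\ref{eq:57}) and verifies the first hypothesis~(\ref{eq:33}) of Proposition~\ref{prop:lb}. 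Thus the conclusion~(\ref{eq:57}) follows \emph{automatically} from the definition of $T_\eps$ as a first exit time; the real content of the proposition is the quantitative lower bound $T_\eps \geq \min(T_1, C)$.

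The key step is to show that the energy budget cannot be exhausted before time $\min(T_1,C)$. First I would integrate the dissipation inequality~(\ref{eq:29}) from $0$ to $\tau \leq T_1$. The troublesome feature, absent in the pure GL case of \cite{bronsardstoth}, is the positive source term $\tfrac{4\phi}{\rho}\bar{L}^{1/2}h_+^2$ on the right-hand side of~(\ref{eq:29}), which means the weighted energy need not decrease. Since $\rho(\tau) \geq \rho(T_1) = 1/2$ for $\tau \leq T_1$ and $\phi$ is bounded on the relevant range, this term is $O(\bar{L}^{1/2})$ uniformly, so integrating gives
\begin{equation*}
E_\phi[w](\tau) - E_\phi[w](0) \leq -\frac{2}{3}\frac{\sqrt{\bar{L}}}{\gamma}\int_0^\tau\!\!\int_{-\rho}^{1-\rho} \phi\, w_\tau^2\,\mathrm{d}R\,\mathrm{d}\sigma + C_4\bar{L}^{1/2}\tau.
\end{equation*}
Combining this with the initial bound $E_\phi[w](0) \leq g(h_+) + c_1\bar{L}^{2\alpha}$ (guaranteed by Proposition~\ref{prop:IC} with the choice $\alpha = 1/4$, so that $\bar{L}^{1/4} = \bar{L}^{2\alpha}$) controls $E_\phi[w](\tau)$ from above by $g(h_+) + O(\bar{L}^{2\alpha}) + O(\bar{L}^{1/2})\tau$, while Proposition~\ref{prop:lb} gives the matching lower bound $E_\phi[w](\tau) \geq g(h_+) - C_2\bar{L}^{1/2-\alpha} - C_3\bar{L}^{2\alpha}$ so long as~(\ref{eq:57}) persists.

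The argument closes by estimating the dissipation integral from below in terms of the quantity that defines $T_\eps$. The idea is that the change $\int_{-a}^{a}|g(w(R,\tau))-g(w(R,0))|\,\mathrm{d}R$ reaching the threshold $\tfrac{g(h_+)}{8}\bar{L}^\alpha$ forces a lower bound on $\int_0^\tau\!\int \phi\,w_\tau^2$: writing $g(w)_\tau = g'(w)w_\tau = \tfrac{2}{\sqrt3}w(h_+-w)w_\tau$ and applying Cauchy--Schwarz in the $R$-variable (using $E_\phi \leq C_1$ to bound the factor $\int \phi\, g'(w)^2$), one shows that the spatial change up to time $\tau$ is at most $C\sqrt{\tau}\big(\int_0^\tau\!\int\phi\,w_\tau^2\big)^{1/2}$. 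Rearranging, if $T_\eps < \min(T_1, C)$ then the threshold change $\tfrac{g(h_+)}{8}\bar{L}^\alpha$ at $\tau = T_\eps$ forces $\int_0^{T_\eps}\!\int\phi\,w_\tau^2 \gtrsim \bar{L}^{2\alpha}/T_\eps$, and feeding this into the upper bound yields $E_\phi[w](T_\eps) \leq g(h_+) - c\,\bar{L}^{2\alpha - 1/2}/T_\eps + \cdots$. I expect the main obstacle to be balancing the exponents: the favorable dissipation term carries a prefactor $\sqrt{\bar{L}}$, so the gain is $\bar{L}^{1/2}\cdot\bar{L}^{2\alpha}/T_\eps = \bar{L}^{1/2+2\alpha}/T_\eps$, which must dominate the error terms $C_2\bar{L}^{1/2-\alpha}$, $C_3\bar{L}^{2\alpha}$, and $C_4\bar{L}^{1/2}T_\eps$ from the lower and upper bounds to produce the contradiction $E_\phi[w](T_\eps) < g(h_+) - C_2\bar{L}^{1/2-\alpha} - C_3\bar{L}^{2\alpha}$. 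Tracking these powers carefully for $0 < \alpha \leq 1/4$, and confirming that the extra $O(\bar{L}^{1/2})$ source term does not spoil the window in which the dissipation gain wins, is precisely where the verification promised before~(\ref{eq:12}) — that the inequalities survive the additional $-6h/r^2$ curvature term — must be carried out explicitly.
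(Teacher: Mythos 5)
The paper offers no proof of Proposition~\ref{prop:3} beyond the sentence ``the proof follows verbatim from \cite{bronsardstoth}'', and your skeleton is indeed a reconstruction of that Bronsard--Stoth argument: exit-time definition plus triangle inequality to get \eqref{eq:57}, integrated dissipation inequality \eqref{eq:29}, the lower bound of Proposition~\ref{prop:lb}, and a Cauchy--Schwarz estimate linking the change of $\int g(w)$ to the dissipation. Your first step is correct and is the right way to see that \eqref{eq:57} is automatic: for $\tau< T_\eps$, \eqref{eq:56} and \eqref{eq:55} give \eqref{eq:57}, so the hypothesis \eqref{eq:33} of Proposition~\ref{prop:lb} persists up to and including $T_\eps$. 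Handling the extra source term in \eqref{eq:29} as an $O(\bar{L}^{1/2})\tau$ contribution is also right.

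The genuine gap is in the key quantitative step, and as written your exponents do not close. You bound the change by $C\sqrt{\tau}\left(\int_0^\tau\!\int\phi\,w_\tau^2\right)^{1/2}$, i.e.\ you control $\int\phi\,g'(w)^2$ by a constant. This discards the factor that makes the whole argument work: since $g'(w)^2=\tfrac{4}{3}w^2(h_+-w)^2$ is, up to the constant $\tfrac{4}{3}$, exactly the potential term of the energy density, which enters $E_\phi$ with weight $\bar{L}^{-1/2}$, one has $\int_{-a}^{a}\phi\,g'(w)^2\,\mathrm{d}R\le \tfrac{4}{3}\sqrt{\bar{L}}\,E_\phi[w]\le C\sqrt{\bar{L}}$ (and $\phi$ is bounded below on $[-a,a]$ for $\tau\le T_1$), so the change up to time $\tau$ is at most $C\,\bar{L}^{1/4}\sqrt{\tau}\left(\int_0^\tau\!\int\phi\,w_\tau^2\right)^{1/2}$. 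Run the budget both ways to see the difference. With your version, reaching the threshold only forces $\int_0^{T_\eps}\!\int\phi\,w_\tau^2\gtrsim \bar{L}^{2\alpha}/T_\eps$, so the dissipation gain in the energy balance is $\sqrt{\bar{L}}\cdot\bar{L}^{2\alpha}/T_\eps=\bar{L}^{1/2+2\alpha}/T_\eps$, which beats the unavoidable $O(\bar{L}^{2\alpha})$ errors ($c_1\bar{L}^{2\alpha}$ from the initial energy, $C_3\bar{L}^{2\alpha}$ from Proposition~\ref{prop:lb}) only when $T_\eps\lesssim\bar{L}^{1/2}$; the contradiction then yields merely $T_\eps\gtrsim\bar{L}^{1/2}\to 0$, not an order-one bound. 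With the corrected factor, the threshold forces $\int_0^{T_\eps}\!\int\phi\,w_\tau^2\gtrsim\bar{L}^{2\alpha-1/2}/T_\eps$, the gain becomes $\bar{L}^{2\alpha}/T_\eps$, and comparison with the $O(\bar{L}^{2\alpha})$ errors gives $T_\eps\ge C$ --- this is exactly the step you deferred to ``tracking these powers carefully,'' and it is where the proof lives. Two further points: your parenthetical ``$\alpha=1/4$ so that $\bar{L}^{1/4}=\bar{L}^{2\alpha}$'' is wrong ($2\alpha=1/2$ there; data built via Proposition~\ref{prop:IC} satisfy the hypothesis $E_\phi[w](0)\le g(h_+)+c_1\bar{L}^{2\alpha}$ only when $2\alpha\le 1/4$, although in Proposition~\ref{prop:3} that bound is simply assumed); and even after the fix, the remaining error $C_2\bar{L}^{1/2-\alpha}$ from Proposition~\ref{prop:lb} dominates $\bar{L}^{2\alpha}$ when $\alpha>1/6$, so this argument cleanly delivers an order-one $T_\eps$ only for $\alpha\le 1/6$ --- a tension already present in the paper's stated range $0<\alpha\le\tfrac14$ rather than one you introduced, but worth flagging rather than hiding in the constants.
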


The proof follows verbatim from \cite{bronsardstoth}. Equipped with a weighted energy, estimates for the rate of change of the weighted energy and bounds for the weighted energy along with suitable initial conditions, we adapt arguments from Theorem~$3.1$ of \cite{bronsardkohn} to prove:
\\
\begin{theorem}
\label{thm:1}
Let $\Omega$ be the unit ball in $\Rr^3$. Let $h_{\bar{L}}\left(r, t \right)$ denote the solution of the evolution equation (\ref{eq:8}) subject to the fixed boundary conditions and an initial condition with an interface structure and appropriately bounded weighted energy:
\begin{equation}
\label{eq:66}
\int_{0}^{1}\psi_0(r)\left[ \sqrt{\bar{L}}\left(
\frac{w_R^2}{3} + \frac{ 2 w^2}{(R + \rho)^2} \right) + \frac{w^2
\left(h_+ - w \right)^2}{\sqrt{\bar{L}}} \right]r^2\, \mathrm{d}r \leq g(h_+) + \Gamma \bar{L}^{1/4}
\end{equation}
with $\Gamma$ independent of $\bar{L}$ and
$\psi_0\left( r \right) = \frac{1}{r_0^2} \exp\left[-2\left(\frac{r}{r_0} - 1 \right)\right]$. Then for any $T < T_\eps$, where $T_\eps$ has been defined in Proposition~\ref{prop:3}, we have
\begin{equation}
\label{eq:68}
\lim_{\bar{L}\to 0} \int_{0}^{T}\int_{\Omega} \left| h_{\bar{L}}\left(r, t \right) - f\left(r, t\right) \right| r^2\, \mathrm{d}r \mathrm{d}t = 0
\end{equation}
where $f(r,t)$ has been defined in (\ref{eq:14}).
\end{theorem}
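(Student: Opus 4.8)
The plan is to establish the $L^1$-convergence in (\ref{eq:68}) by combining the lower bound for the weighted energy (Proposition~\ref{prop:lb}) with the upper bound propagated from the initial data (Proposition~\ref{prop:3}), and then converting control of the interface energy $g(w)$ into control of the profile itself. First I would pass to the travelling-frame variable $R = r - \rho(t)$ and work with $w(R,\tau)$ as in (\ref{eq:17}), so that the fixed profile $f(r,t)$ becomes the step function $v(R)$ of (\ref{eq:32}); the target (\ref{eq:68}) then amounts to showing $\int_0^T\!\int |w - v|\,r^2\,\mathrm{d}R\,\mathrm{d}t \to 0$. The initial condition constructed in Proposition~\ref{prop:IC} satisfies the energy hypothesis $E_\phi[w](0) \le g(h_+) + C\bar{L}^{1/4}$, which is exactly the bound (\ref{eq:66}) required by Theorem~\ref{thm:1} with $\Gamma = C$, and one checks that the interface-energy smallness hypothesis (\ref{eq:55}) holds as well (the profile $\sigma$ is designed so that $g(w(\cdot,0))$ is $\bar{L}^{\alpha}$-close to $g(v)$ in $L^1$).

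Next I would invoke Proposition~\ref{prop:3} to conclude that the interface stays localized: for every $\tau < T_\eps$ (and $T_\eps \ge \min(T_1, C)$ is of order one by the persistence estimate) the inequality (\ref{eq:57}) holds, i.e.\ $\int_{-a}^a |g(w(R,\tau)) - g(v)|\,\mathrm{d}R < \tfrac{g(h_+)}{4}\bar{L}^{\alpha}$. Simultaneously, the differential inequality (\ref{eq:29}) controls the growth of $E_\phi[w](\tau)$: integrating (\ref{eq:29}) in time and using that the forcing term $\tfrac{4\phi}{\rho}\bar{L}^{1/2}h_+^2$ is $O(\bar{L}^{1/2})$ over the bounded interval $[0,T_1]$, the weighted energy remains below $g(h_+) + C\bar{L}^{1/4} + O(\bar{L}^{1/2}) = g(h_+) + o(1)$, so the energy-boundedness hypothesis $E_\phi[w] \le C_1$ of Proposition~\ref{prop:lb} is met uniformly in $\tau \le T_1$. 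Combined with (\ref{eq:57}), Proposition~\ref{prop:lb} yields the matching lower bound $E_\phi[w](\tau) \ge g(h_+) - C_2\bar{L}^{1/2-\alpha} - C_3\bar{L}^{2\alpha}$.

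The crux is then a \emph{squeeze}: the weighted energy is trapped between $g(h_+) - o(1)$ and $g(h_+) + o(1)$, meaning essentially all the energy concentrates in a single interface of the optimal cost $g(h_+)$. Following Theorem~$3.1$ of \cite{bronsardkohn}, I would use this energy-equipartition to force $w(\cdot,\tau)$ to look like a single sharp transition from $0$ to $h_+$: any excess spreading of the transition, or any spurious second interface, would cost extra weighted energy and violate the upper bound. Quantitatively, the closeness of $g(w)$ to $g(v)$ in $L^1$ on $[-a,a]$ together with the energy sandwich pins the location of the transition to $R=0$ (i.e.\ $r = \rho(t)$) up to an $o(1)$ error as $\bar{L}\to 0$, which is precisely the statement that the front propagates by mean curvature through (\ref{eq:12}). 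Translating back via $g(s) = \tfrac{2}{\sqrt 3}\int_0^s w(h_+-w)\,\mathrm{d}w$ — a strictly monotone function of the profile level — gives $\int |w - v| \to 0$ pointwise in $\tau$, and dominated convergence (using the uniform $L^\infty$ bound $|h| \le \sqrt{2/3}\,B/3C$ from Proposition~\ref{prop:1}) upgrades this to the space-time integral in (\ref{eq:68}).

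The main obstacle I anticipate is verifying that the extra term $-6h/r^2$ in (\ref{eq:8}), which is absent in the pure Ginzburg-Landau setting of \cite{bronsardkohn}, does not destroy the energy sandwich. This term is the source of both the non-monotonicity in (\ref{eq:29}) (the positive forcing $\tfrac{4\phi}{\rho}\bar{L}^{1/2}h_+^2$) and of the $\tfrac{2w^2}{(R+\rho)^2}$ contribution to $E_\phi$. The key observation making the argument go through is that on the time interval $\tau \le T_1$ we have $\rho(\tau) \ge 1/2$ bounded away from zero, so $(R+\rho)^2$ is bounded below and this term contributes at most $O(\sqrt{\bar{L}})$ to the energy, vanishing in the limit; likewise the forcing is $O(\sqrt{\bar{L}})$ and is dominated by the $\bar{L}^{1/4}$ and $\bar{L}^{2\alpha}$ errors already present (for the admissible range $0 < \alpha \le 1/4$). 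Thus the singular geometric term is subcritical relative to the interface energy, and the \cite{bronsardkohn}--\cite{bronsardstoth} machinery applies verbatim once these scalings are checked.
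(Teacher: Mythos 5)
Your proposal is correct and follows essentially the same route as the paper: the paper's own argument (given only as ``Comments on the proof'') combines exactly the ingredients you use---the dissipation inequality (\ref{eq:29}), the lower bound of Proposition~\ref{prop:lb} activated through Proposition~\ref{prop:3}, and the initial upper bound (\ref{eq:66})---into the Bronsard--Kohn energy squeeze that forbids extra interfaces and pins $h$ to $0$ or $h_+$ away from $R=\rho(\tau)$. Your closing check that the extra term $-6h/r^2$ contributes only $O(\sqrt{\bar{L}})$ (since $\rho \geq 1/2$ for $\tau \leq T_1$ and the weight $\phi$ cancels the singularity at $r=0$) is precisely the verification the paper identifies as the reason the results of \cite{bronsardkohn,bronsardstoth} cannot be quoted verbatim.
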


\textit{Comments on the proof:} The condition (\ref{eq:66}) is equivalent to the bound in Proposition~\ref{prop:IC}, which can be realized by initial conditions with ``efficient interfaces". The key ingredients are the rate of change of the weighted energy in (\ref{eq:29}), the lower bound for the weighted energy in terms of $g(h_+)$ in Proposition~\ref{prop:lb} and the upper bound in (\ref{eq:66}). These ensure that the system does not have sufficient energy to create additional interfaces away from $R=\rho(\tau)$ and $h$ is effectively constant (either $h=0$ or $h=h_+$ ) away from $R=\rho(\tau)$. Given $h_{\bar{L}}\left( r, t \right)$, we obtain the unique solution $\Qvec_{\bar{L}}\left(\xvec, t \right)$ of the evolution equation (\ref{eq:6}), subject to the boundary and initial conditions, given by $\Qvec_{\bar{L}}\left(\xvec, t \right) =
h_{\bar{L}}\left(|\xvec|, t \right)\left(\rhat \otimes \rhat - \frac{\mathbf{I}}{3} \right)$.

\subsection{Numerical simulations on the sphere}
We numerically compute solutions of the gradient flow system (\ref{eq:6}) on a three-dimensional droplet, with the fixed boundary condition $\Qvec_b$ in \eqref{Qb} and various types of initial conditions. From the numerical results in \cite{gartlandmkaddem2002}, the radial hedgehog (RH) solution is the global minimizer of the LdG energy for this model problem; the RH solution is a uniaxial solution of the form $\mathbf{H} = s(r)\left(\rhat \otimes \rhat - \frac{\mathbf{I}}{3}\right)$ where $s(0)=0$ and $s(r)>0$ for $r>0$. In particular, $s$ rapidly interpolates between $s=0$ and the boundary value of $s_+ ={B}/{3C}$ over a distance proportional to the nematic correlation length, $\xi \propto \sqrt{{L C}/{B^2}}$, and the localized region of reduced order near $r=0$ is referred to as the "defect core". We expect the long-time dynamics to converge to the RH solution for all choices of initial conditions. However, we are equally interested in the transient dynamics and the dynamic persistence of isotropic-nematic interfaces. 
 In what follows, we look at four different initial conditions: uniaxial initial conditions within the remit of Theorem~\ref{thm:1}, uniaxial initial conditions outside the scope of Theorem~\ref{thm:1}, biaxial initial conditions and initial conditions that break the radial symmetry of the order parameter. 

Let $R$ be the radius of the droplet and we non-dimensionalize the system (\ref{eq:6}) by setting
$\bar{t} = \frac{20 t L}{ \gamma R^2},\,\bar{\xvec} = \frac{\xvec}{R}$ to yield
\begin{eqnarray}
&&  \frac{\partial Q_{11}}{\partial \bar{t}}= \bar{\Delta} Q_{11} -\frac{1}{\tilde{L}}\bigg({A}Q_{11}+2{C}(Q_{11}^2+Q_{22}^2 +Q_{12}^2 + Q_{11}Q_{22} + Q_{13}^2 + Q_{23}^2)Q_{11}  \nonumber \\
&&-\frac{{B}}{3}(Q_{11}^2 +Q_{12}^2 +Q_{13}^2-2Q_{22}^2 -2Q_{11}Q_{22}-2Q_{23}^2)\bigg),  \label{eq:n1}\\
&&\frac{\partial Q_{22}}{\partial\bar{t}}=\bar{\Delta} Q_{22} -\frac{1}{\tilde{L}}\bigg(  {A}Q_{22}+2{C}(Q_{11}^2+Q_{22}^2 +Q_{12}^2 + Q_{11}Q_{22} + Q_{13}^2 + Q_{23}^2)Q_{22} \nonumber\\
&&-\frac{{B}}{3}(Q_{12}^2 +Q_{22}^2 +Q_{23}^2-2Q_{11}^2 -2Q_{11}Q_{22}-2Q_{13}^2)\bigg),  \label{eq:n2}\\
&&\frac{\partial Q_{12}}{\partial\bar{t}}=\bar{\Delta} Q_{12} -\frac{1}{\tilde{L}} \bigg(  {A}Q_{12}+2{C}(Q_{11}^2+Q_{22}^2 +Q_{12}^2 + Q_{11}Q_{22} + Q_{13}^2 + Q_{23}^2)Q_{12}  \nonumber\\
&&-{B}(Q_{11}Q_{12}+Q_{12}Q_{22} +Q_{13}Q_{23})\bigg),  \label{eq:n3} \\
&&\frac{\partial Q_{13}}{\partial\bar{t}}=\bar{\Delta} Q_{13} -\frac{1}{\tilde{L}}\bigg( {A}Q_{13}+2{C}(Q_{11}^2+Q_{22}^2 +Q_{12}^2 + Q_{11}Q_{22} + Q_{13}^2 + Q_{23}^2)Q_{13}  \nonumber\\
&&-{B}(Q_{12}Q_{23}-Q_{22}Q_{13})\bigg),  \label{eq:n4}\\
&&\frac{\partial Q_{23}}{\partial \bar{t}}=\bar{\Delta} Q_{23} -\frac{1}{\tilde{L}}\bigg( {A} Q_{23}+2{C}(Q_{11}^2+Q_{22}^2 +Q_{12}^2 + Q_{11}Q_{22} + Q_{13}^2 + Q_{23}^2)Q_{23} \nonumber\\
&&-{B}(Q_{12}Q_{13}-Q_{23}Q_{11})\bigg)  \label{eq:n5},
\end{eqnarray}
where $\bar{\Delta}$ denotes the Laplacian with respect to the re-scaled coordinate $\bar{\xvec}$.
(In what follows, we drop the \emph{bars} from the dimensionless variables).
 We take $R^2 = 10^{-10} \,m^2$, $\tilde{L} = \frac{L}{R^2} N/m^2$, $B=0.64\times 10^4\, N/m^2$, $C=0.35 \times 10^4\, N/m^2$ and $A=\frac{B^2}{27 C}$ throughout the paper and work with either $\tilde{L}=0.05$ or $0.01$ \cite{gartlandmkaddem2002}.\\
Here, and in the subsequent sections of this paper, the system of 
reaction-diffusion equations (\ref{eq:n1} - \ref{eq:n5}) is solved as follows. 
The unit ball is embedded into the unit cube $[-1,1]^3$ which is discretised with a uniform cartesian grid with spatial resolution $h$. 
We implement a special case of an immersed boundary method (see for example, \cite{Zhong:2007aa}), and apply the boundary conditions at all discrete points within distance $h/2$ of the boundary. For interior points, the solution satisfies the system (\ref{eq:n1}-\ref{eq:n5}) and in the exterior of the physical domain, we solve the simple heat equation (i.e. take $A=B=C=0$) and use periodic boundary conditions on the cube. This setup makes it simple and efficient to use higher order and spectral schemes for spatial derivatives. Timestepping is accomplished with a standard fourth-order Runge-Kutta scheme. Simple finite difference schemes are also implemented to verify the results.

The first two initial conditions are uniaxial RH type initial conditions of the form 
\begin{equation}
\label{eq:rh1}
\Qvec(\rvec, 0) = h(r,0)\left(\rhat \otimes \rhat - \frac{\mathbf{I}}{3} \right).
\end{equation} 
\textbf{Case I} prescribes an initial condition, $h(r,0)$, with an interface structure given by
$h(r)=\frac{1}{2}h_+\left(1+\tanh \left(\left(r-r_0\right)/\sqrt{\tilde{L}}\right)\right)$,
 and \textbf{Case II} describes an initial condition without an interface structure with
$h(r)=h_+ r$.  Case I is within the remit of Theorem~\ref{thm:1} and the numerics demonstrate that the solution retains the isotropic-nematic interface for all times and that the interface propagates towards the origin according to mean curvature for small times, equilibriating near the origin for long times. For long times, the radius of the isotropic core scales, as expected, with $\sqrt{\tilde{L}}$ and arises out of the saddle structure of $\boldsymbol{Q}$ at the origin. The dynamic solution in Case II very quickly develops an inwards-propagating interface separating the isotropic core at the centre from the ordered nematic state and then follows the same evolution path as Case I
(see Figure~\ref{InterfaceEvolution}). The long-time behaviour of the dynamic solutions for Cases I and II  are indistinguishable within numerical resolution, as expected.

\begin{figure}
\centering
\includegraphics[width=4.3cm]{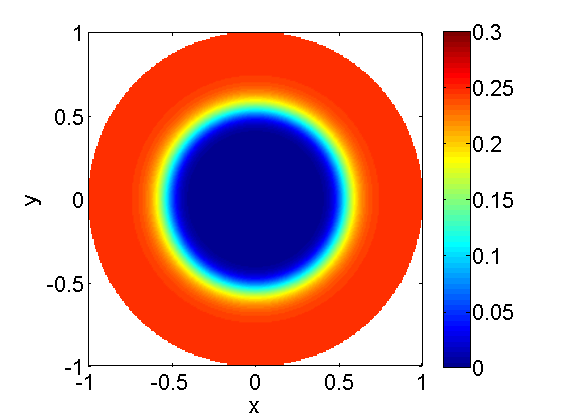}\includegraphics[width=4.3cm]{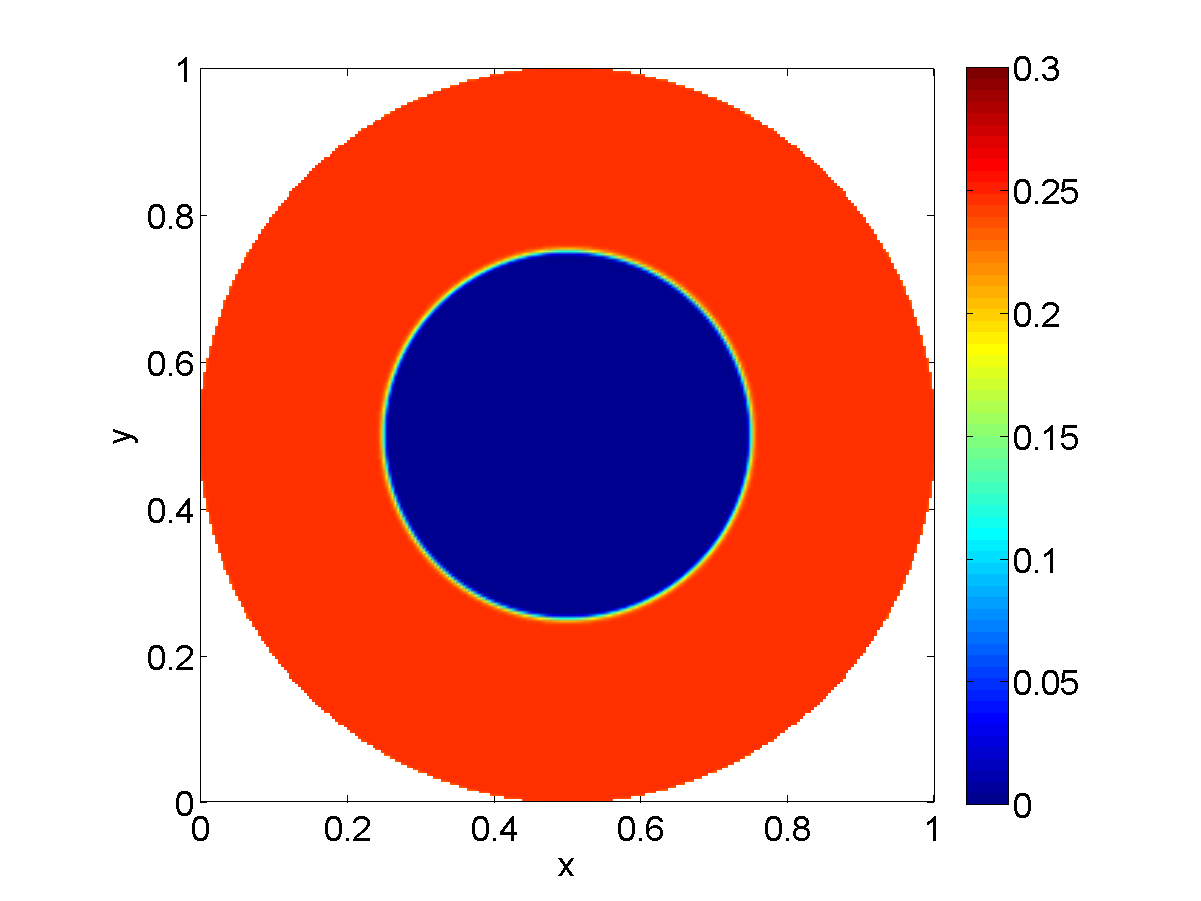}\includegraphics[width=4.3cm]{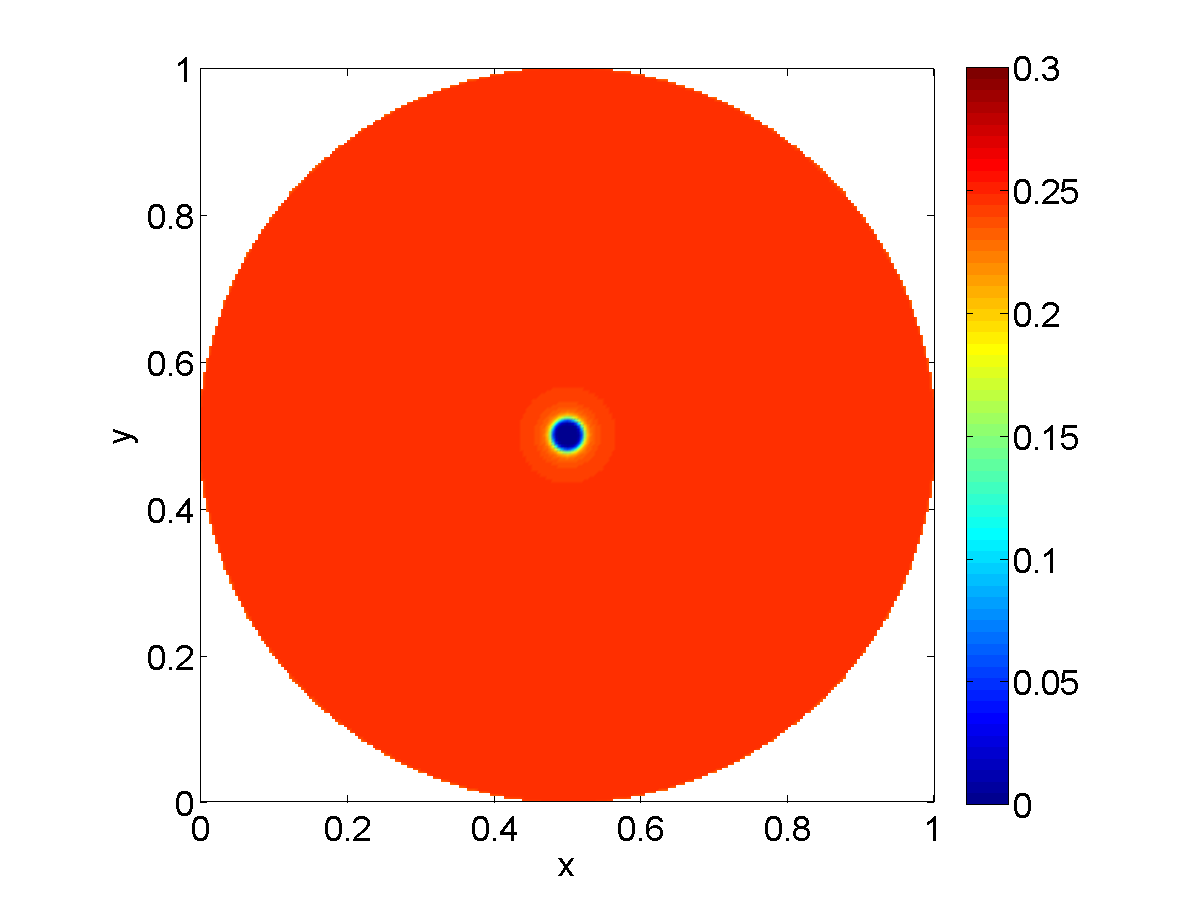}
\caption{$|\boldsymbol{Q}(\rvec,t)|^2$ on the cross section of the unit ball at $\phi=0$ for Case I and Case II, at $t=0$, $t=0.001$ and $t=0.125$. The spatial resolution is $h=\frac{1}{256}$.}
\label{Interface}
\end{figure}

\begin{figure}
\centering
\includegraphics[width=6cm]{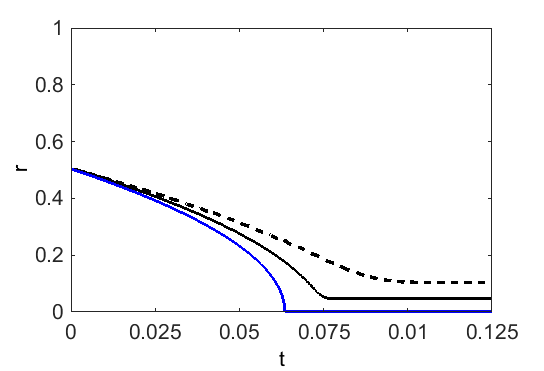}
\caption{Interface position for Case I and Case II with $r_0=0.5$ (black), and predicted position according to motion by mean curvature (blue) for $\tilde{L}=0.05$ (dashed) and $\tilde{L}=0.01$ (solid). In Case II an interface quickly develops so the two curves are indistinguishable.
The spatial resolutions for $\tilde{L}=0.05$ and $\tilde{L}=0.01$ are $h=\frac{1}{128}$ and $\frac{1}{256}$ respectively.}
\label{InterfaceEvolution}
\end{figure}

\begin{figure}
\centering
\includegraphics[width=4.3cm]{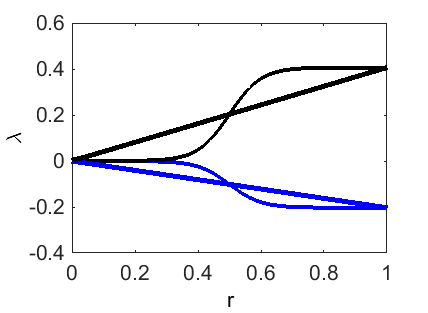}\includegraphics[width=4.3cm]{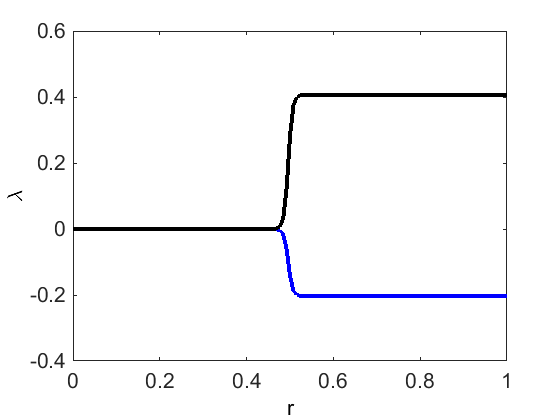}\includegraphics[width=4.3cm]{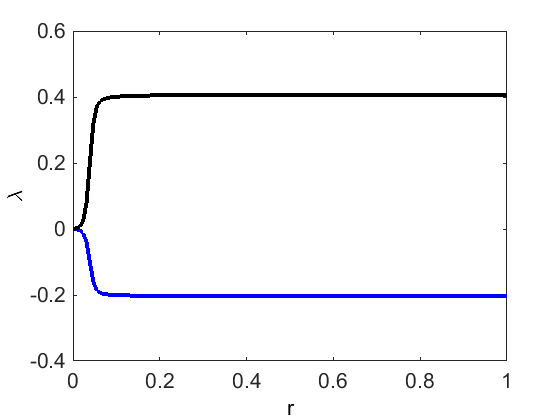}
\caption{Eigenvalues of $\boldsymbol{Q}(\mathbf{r},t)$, as a function of $r$, for Case I  and Case II (dotted), at $t=0$, $t=0.001$ and $t=0.125$.}
\label{Biaxiality}
\end{figure}

\begin{figure}
\centering
{\includegraphics[width=4.3cm]{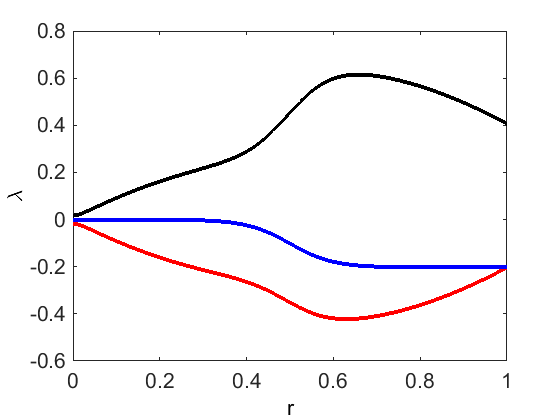}\includegraphics[width=4.3cm]{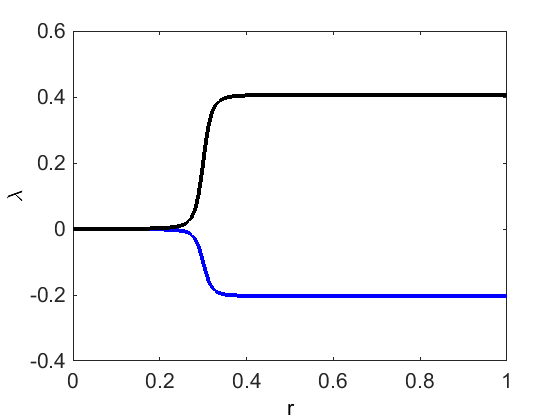}\includegraphics[width=4.3cm]{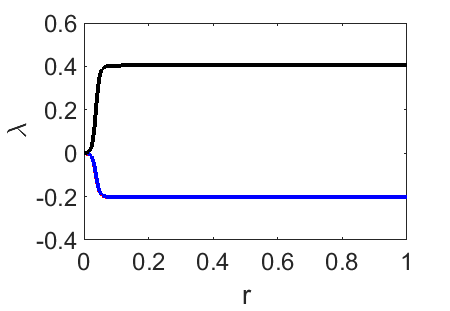}}
\caption{Eigenvalues of $\boldsymbol{Q}(\rvec,t)$, as a function of $r$ for initial condition \eqref{biaxsphereIC}, at $t=0$, $t=1.5 \times 10^{-4}$ and $t=0.085$.}
\label{BiaxialityBiax}
\end{figure}
$\mbox{}$\\
For \textbf{Case III}, we use a biaxial initial condition given by
\begin{equation}
\label{biaxsphereIC}
\boldsymbol{Q}(\rvec,0)=h(r)\left(\hat{\boldsymbol{r}}\otimes \hat{\boldsymbol{r}} -\frac{\boldsymbol{I}}{3}\right)+s(r)\left(\boldsymbol{m}\otimes \boldsymbol{m}-\boldsymbol{p} \otimes \boldsymbol{p} \right),
\end{equation}
where $\boldsymbol{m}=\left(\cos \theta \cos \phi,\cos \theta \sin \phi, -\sin \theta \right)$ and $\boldsymbol{p}=\left(-\sin \phi, \cos \phi, 0\right)$. 
The function $h(r)$ has an interface structure, as in Case I, and $s(r)=r(1-r)$. This case is outside the scope of Theorem~\ref{thm:1} and we are not guaranteed the uniaxial radial symmetry of the dynamic solution. The numerics show that the dynamic solution quickly becomes uniaxial within numerical resolution, as demonstrated by the evolution of the eigenvalues of $\Qvec(\rvec, t)$ in Figure \ref{BiaxialityBiax}. The dynamic solution exhibits an inwards-propagating interface separating the isotropic core at $r=0$ from the ordered nematic state and the interface equilibrates near the origin. We also numerically compute the differences
$$ \left| \frac{Q_{ij}(\rvec, t)}{|\Qvec|} -\left( \frac{\xvec_i \xvec_j}{|\rvec|^2} - \frac{\delta_{ij}}{3} \right)\right| $$ as a function of time and our numerics show that this difference vanishes everywhere away from the origin, since $|\Qvec(0, t)| = 0$ for large times; see Figure \ref{Q1ComparisonBiax}.\\

\begin{figure}
	\centering
	\includegraphics[width=4.3cm]{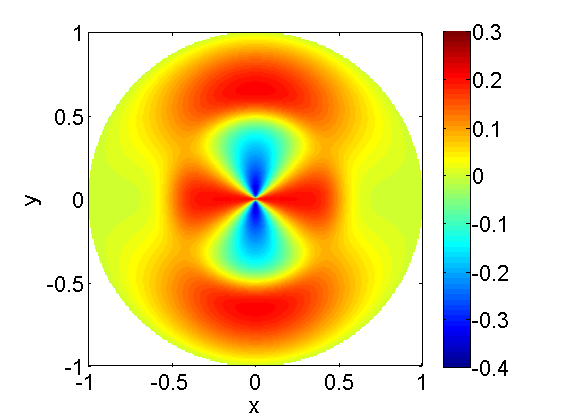}\includegraphics[width=4.3cm]{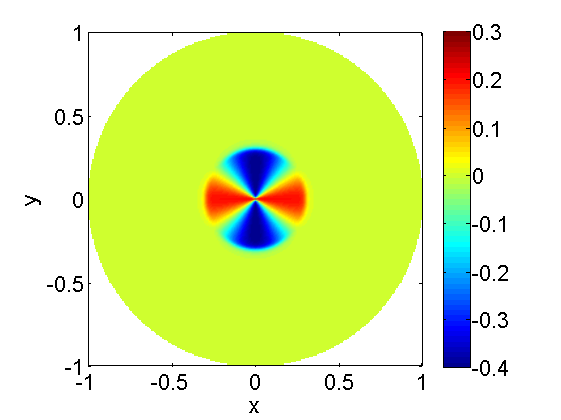}\includegraphics[width=4.3cm]{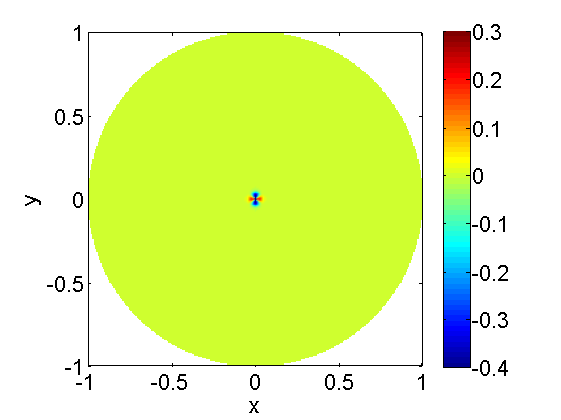}
	\caption{$Q_{11}(\rvec,t)-h_+((\cos \phi \sin \theta)^2-1/3)$ on the cross section of the unit ball for initial condition \eqref{biaxsphereIC}, at $t=0$, $t=1.5 \times 10^{-4}$ and $t=0.085$. The spatial resolution is $h=\frac{1}{256}$.}
	\label{Q1ComparisonBiax}
\end{figure}

%
  \textbf{Case IV} breaks the radial symmetry of the initial order parameter by employing an uniaxial initial condition of the form (\ref{eq:rh1}) with
  \small{
  \begin{equation}
  \label{eq:breaksymm}
  h(r, \theta, \phi, 0) = \frac{B}{6C}\left( 1 + \tanh\left( \frac{r^2\sin^2\theta( \cos^2\phi + 4  \sin^2\phi) + 2 r^2 \cos^2\theta - 0.5}{\sqrt{\tilde{L}}}\right) \right).
  \end{equation}   }
 \normalsize 
 Here, the initial interface is ellipsoidal in shape. The interface becomes circular and the subsequent dynamics are indistinguishable from Case I as seen in Figure \ref{ModQEllipse}.\\
  
\begin{figure}
	\centering
	\includegraphics[width=4.3cm]{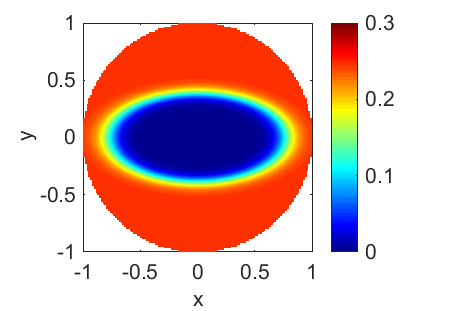}
	\includegraphics[width=4.3cm]{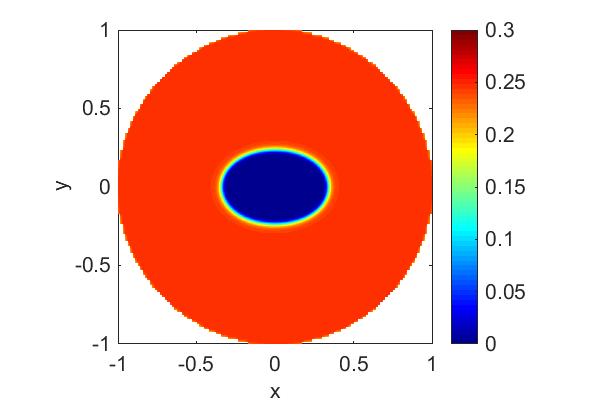}\includegraphics[width=4.3cm]{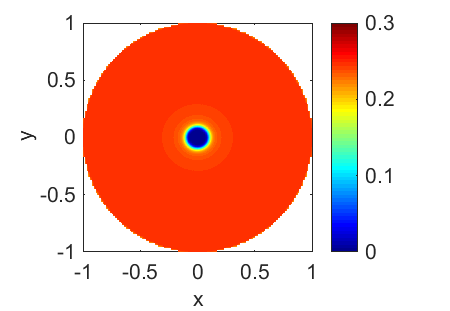}
	\caption{$|\boldsymbol{Q}(\rvec,t)|^2$ on the cross section of the unit ball for initial condition (\ref{eq:breaksymm}), at $t=0$, $t=0.05$ and $t=0.1$. The spatial resolution is $h=\frac{1}{256}$.}
	\label{ModQEllipse}
\end{figure}  
  
 These examples illustrate that whilst the static RH solution has a localized defect core of reduced order near the origin that may not be experimentally observable, the transient solutions exhibit well-defined isotropic-nematic interfaces (see Figure~\ref{InterfaceEvolution}). These interfaces propagate towards the droplet centre and may be experimentally observable.


\section{Fronts on a disc}
\label{sec:cylinder}
\subsection{Analysis on a disc}
We take our computational domain to be the unit disc defined by 
\begin{equation}
\label{eq:c1}
\Omega = \left\{ (r, \theta) \in \Rr^2; \,0\leq r
\leq 1,\, 0\leq \theta < 2\pi \right\},
\end{equation}
with fixed boundary condition $\Qvec = \Qvec_c = \frac{B}{3C}\left(\nvec_1 \otimes \nvec_1 - \frac{\mathbf{I}}{3} \right)$ on $r=1$
for $\nvec_1 = \left(\cos \theta, \sin\theta, 0 \right)$, the radial unit vector. This boundary condition is purely uniaxial and is a minimum of
the bulk potential. We study dynamic
solutions of the parabolic system, (\ref{eq:6}), subject to 
$\Qvec=\Qvec_c$ on $r=1$ with different types of initial conditions. We note that such solutions also survive as translationally invariant solutions, independent of $z$, on a cylinder with free boundary conditions on the top and bottom surfaces. 

We first present some heuristics based on critical points of the LdG energy on a disc
subject to this Dirichlet condition.
The gradient flow model dictates that dynamic
solutions evolve along a path of decreasing energy, converging to
a critical point of the LdG energy \cite{gradientflow}.
Hence, the long-time behaviour can be predicted by an analysis of
the corresponding stationary problem. In \cite{amaz}, the authors present a general analysis of
LdG energy minimizers on 3D nice domains(see (\ref{eq:2})), in the $L \to 0$ limit.
Based on their analysis, the minimizers converge strongly in $W^{1,2}\left(\Omega; S_0 \right)$ to a limiting harmonic map of the form
$ \Qvec = s\left(\nvec^* \otimes \nvec^* -\frac{\mathbf{I}}{3} \right)$ 
such  that $s=0$ or $s={B}/{3C}$ a.e. (so that $\Qvec$ is a minimum of
$f_B$) and the director $\nvec^*$ is a solution of the harmonic map equations
$ \Delta \nvec^* + |\nabla \nvec^*|^2 \nvec^* = 0$ (also see \cite{montero} for recent work on planar domains). The convergence is shown to be uniform away from the singularities of the limiting harmonic map, which need not be unique. There are at least two harmonic maps on a disc with the boundary condition 
$ \nvec^* = \left( \cos \theta, \sin \theta, 0 \right)$ on $r=1$ \cite{bethuel}, 
\begin{eqnarray}
\label{eq:c6} \nvec_1 = \left(\cos \theta, \sin\theta, 0
\right),\,\, \,\mathrm{and} \,\,\, \nvec_2 = \left(\frac{2x}{1 + r^2}, \frac{2y}{1 + r^2}, \frac{1
- r^2}{1 + r^2} \right).
\end{eqnarray}
We conjecture that there are at least two competing limiting harmonic maps for this two-dimensional problem, defined in terms of $\nvec_1$ and $\nvec_2$ above:
\begin{eqnarray}
\label{eq:c8}  \Qvec_1 = s\left(\nvec_1 \otimes \nvec_1 -
\frac{\mathbf{I}}{3} \right), \,\,\, \mathrm{and}\,\,\,\Qvec_2 = \frac{B}{3C}\left(\nvec_2 \otimes \nvec_2 -
\frac{\mathbf{I}}{3} \right).
\end{eqnarray}
As $\nvec_1$ is not defined at $r=0$, $\Qvec_1$ must have an isotropic point at $r=0$, with $s \to {B}/{3C}$
rapidly away from $r=0$. We refer to $\Qvec_1$ as being the
two-dimensional planar radial hedgehog profile. However, $\nvec_2$ has no singularity on $\Omega$ and $\Qvec_2$ does not have an isotropic core. We
predict that the dynamic solutions of (\ref{eq:6}), subject to
$\Qvec=\Qvec_c$ on $r=1$, converge to $\Qvec_1$ everywhere away from the origin (where $\nvec_1$ is singular) if escape into the $z$-direction is not allowed. Equivalently, $|\Qvec_{L}(\rvec, t) - \Qvec_1 (\rvec, t) | \to 0$ as $L \to 0$ for $|\rvec|>\sigma(L)$, $\sigma(L) \to 0$ where $L \to 0$ (or as $\frac{LC}{R^2 B^2} \to 0$) but $\Qvec_1$ is not a stationary solution of (\ref{eq:6}). The dynamic solutions converge to $\Qvec_2$ if escape into third dimension is allowed. Next, we have a lemma which demonstrates that escape into third dimension is not allowed for certain initial conditions.
We refer to a $\Qvec$-tensor as being ``planar" if the components $Q_{13},
Q_{23} = 0$ are identically zero on $\Omega$ and ``non-planar" if not. In
particular, $\Qvec_c$, is a planar
$\Qvec$-tensor.\\ 

\begin{lemma}
Let $\Qvec(\rvec, t)$ be a solution of (\ref{eq:6}) on $\Omega$,
with fixed boundary condition $\Qvec=\Qvec_c$ on $r=1$ and a planar initial condition,
$\Qvec(\rvec,0)$ such that $|\Qvec(\rvec, 0)| \leq
\sqrt{\frac{2}{3}}\left(\frac{B}{3C}\right)$ for $\rvec \in \Omega$. Then
$Q_{13} = Q_{23} = 0$ for all $t\geq 0$.
\end{lemma}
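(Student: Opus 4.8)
The plan is to exploit the special structure of the two transverse components $Q_{13}$ and $Q_{23}$: their evolution equations decouple from the remaining three components in the sense that every term on their right-hand sides carries an explicit factor of either $Q_{13}$ or $Q_{23}$. Writing out the $Q_{13}$ and $Q_{23}$ components of the system \eqref{eq:6} (displayed explicitly as \eqref{eq:n4}--\eqref{eq:n5}), the pair $(Q_{13},Q_{23})$ satisfies a \emph{homogeneous} linear parabolic system
\[
\gamma\,\partial_t \begin{pmatrix} Q_{13} \\ Q_{23}\end{pmatrix}
= L\,\Delta \begin{pmatrix} Q_{13} \\ Q_{23}\end{pmatrix}
- \Mvec(\xvec,t)\begin{pmatrix} Q_{13} \\ Q_{23}\end{pmatrix},
\]
where the entries of the $2\times2$ matrix $\Mvec$ are polynomial in the five components $Q_{ij}$. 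Crucially there is no inhomogeneous forcing: the terms $A\Qvec$ and $C|\Qvec|^2\Qvec$ are manifestly proportional to the component being differentiated, and the quadratic $B$-term contributes only the off-diagonal couplings $B(Q_{12}Q_{23}-Q_{22}Q_{13})$ and $B(Q_{12}Q_{13}-Q_{11}Q_{23})$, each linear in $(Q_{13},Q_{23})$. Hence $(Q_{13},Q_{23})\equiv(0,0)$ is compatible with the given data, and it remains only to show it is the unique such solution.

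The key quantitative input is the a priori bound $|\Qvec(\xvec,t)|\le\sqrt{2/3}\,B/3C$ from Proposition~\ref{prop:1}, which forces the coefficients of $\Mvec$ to be bounded uniformly in $\xvec$ and $t$ by a constant depending only on $B,C$. I would then run a Gronwall argument on the $L^2$-energy of the transverse components,
\[
E(t)=\tfrac12\int_\Omega\big(Q_{13}^2+Q_{23}^2\big)\,\mathrm{d}V .
\]
Differentiating in time, substituting the two evolution equations, and integrating the Laplacian terms by parts --- the boundary contributions vanish because $\Qvec_c$ is planar, so $Q_{13}=Q_{23}=0$ on $r=1$ --- yields
\[
\gamma\,\frac{dE}{dt}
= -L\int_\Omega\big(|\nabla Q_{13}|^2+|\nabla Q_{23}|^2\big)\,\mathrm{d}V
- \int_\Omega (Q_{13},Q_{23})\,\Mvec\,(Q_{13},Q_{23})^{\!\top}\,\mathrm{d}V .
\]

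The diffusion term is non-positive and may be discarded. Inside the remaining integral, the $A$-contribution is non-positive since $A=B^2/27C>0$ at the transition temperature, and the $C$-contribution is non-positive because the relevant coefficient equals $\tfrac12|\Qvec|^2\ge0$; so only the sign-indefinite $B$-cross-terms require care. This is the one place where real work is needed: bounding those terms by Young's inequality together with the uniform bound on $|\Qvec|$ gives $\gamma\,dE/dt\le K\,E(t)$ for a constant $K=K(B,C,L)$. Since the initial condition is planar we have $E(0)=0$, and Gronwall's inequality then forces $0\le E(t)\le E(0)\,e^{Kt/\gamma}=0$ for all $t\ge0$, whence $Q_{13}\equiv Q_{23}\equiv0$. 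I expect the sign-indefinite $B$-terms to be the only genuine obstacle; everything else has a favourable sign and the $L^\infty$-bound closes the estimate. As a cleaner alternative, one may observe that the planar subspace $\{Q_{13}=Q_{23}=0\}$ is invariant under the flow and deduce the claim directly from the uniqueness statement of Proposition~\ref{prop:1} applied to the planar solution carrying the same boundary and initial data; the Gronwall computation above is essentially an explicit rendering of that uniqueness.
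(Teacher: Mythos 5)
Your proposal is correct and follows essentially the same route as the paper's own proof: both exploit the fact that the $Q_{13}$, $Q_{23}$ equations are linear and homogeneous in $(Q_{13},Q_{23})$ with coefficients bounded via the $L^\infty$ estimate $|\Qvec|\le\sqrt{2/3}\,B/3C$, integrate by parts using $Q_{13}=Q_{23}=0$ on $r=1$, and apply Gronwall's inequality to $\int_\Omega Q_{13}^2+Q_{23}^2\,\mathrm{d}V$ to conclude it remains zero. Your observations on the signs of the $A$- and $C$-contributions and the closing remark about invariance plus uniqueness are harmless refinements of the same argument.
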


\begin{proof}The proof is an immediate application of Gronwall's
inequality \cite{gronwall}. From \cite{MajumdarLiquidCrystal, amaz}, we
have the following $L^{\infty}$-bound for the dynamic solution,
$\left| \Qvec \left(\rvec, t \right) \right|
 \leq \sqrt{\frac{2}{3}}\left(\frac{B}{3C}\right)$ for $t \geq 0$.
 The two governing PDEs for $Q_{13}$ and $Q_{23}$ can be written
 in the form
 \begin{eqnarray}
 \label{eq:c10}
 && \frac{\partial Q_{13}}{\partial t} - L \Delta Q_{13} =
 F(\Qvec)Q_{13} + B Q_{12}Q_{23} \nonumber \\
 && \frac{\partial Q_{23}}{\partial t} - L \Delta Q_{13} =
 G(\Qvec) Q_{23} + BQ_{12}Q_{13}
 \end{eqnarray}
 where $F$ and $G$ are bounded functions by virtue of the $L^{\infty}$ bound above. 
 We integrate by parts, use the fact that $Q_{13}=Q_{23} = 0$ on $r=1$ and 
 apply Gronwall's inequality to obtain
 \begin{eqnarray}
\label{eq:c13} \left( \int_{\Omega} Q_{13}^2 +
 Q_{23}^2~\mathrm{d}V\right) \leq \exp\left[\delta_4 t \right] \left(\int_{\Omega} Q_{13}^2 +
 Q_{23}^2~\mathrm{d}V\right)\bigg|_{t=0} = 0,
 \end{eqnarray}
 so that $Q_{13} = Q_{23} = 0$ for all $t\geq 0$. 
 \end{proof}

 \subsubsection{Radially symmetric static solutions}
 \label{sec:uv}
We consider a particular class of planar critical points of the LdG energy on a disc, introduced in \cite{fritta} for low temperatures (described by $A<0$ in (\ref{eq:3})), referred to as \textbf{radially symmetric solutions}. The theoretical results in this section are a generalization of the results in \cite{fritta} to the nematic-isotropic transition temperature. We work in a different temperature regime where the LdG bulk potential has two equal energy minima, and hence we cannot a priori assume that the results in \cite{fritta, fritta2} apply to our case. These solutions are labelled by two order parameters $u$ and $v$ that only depend on the radial distance from the origin. We can perform some explicit analysis for these critical points which are good examples of planar initial conditions. 
We first show that there exist radially symmetric planar
$(u,v)$-critical points of the LdG energy on a disc at
the nematic-isotropic transition temperature,
which are unstable in the sense that the second variation of the LdG energy is negative for admissible perturbations.
We use these critical points to construct planar initial conditions for the LdG gradient flow model. We then use numerical simulations to corroborate our heuristics that the corresponding dynamic solutions quickly develop a radially symmetric isotropic-nematic interface (even when the initial data is not radially symmetric) and converge to $\Qvec_1$ everywhere away from the origin. Next we
investigate the effect of small ``non-planar" perturbations of
the planar  initial conditions and show that the solution 
follows the planar dynamics for small but noticeable times, followed by an
abrupt escape into the third dimension at $r=0$ and long-time
convergence to $\Qvec_2$. As in \cite{fritta}, we study the LdG Euler-Lagrange equations,
 \begin{equation}
 \label{eq:c15}
 L\Delta \Qvec = \frac{B^2}{27 C}\Qvec - B \left(\Qvec \Qvec - \frac{|\Qvec|^2}{3}\mathbf{I} \right) + C |\Qvec|^2 \Qvec
 \end{equation}
 and look for solutions of the form
 \begin{equation}
 \label{eq:c14a}
 \Qvec = \frac{u(r)}{2}\left( \nvec_1 \otimes \nvec_1 - \mvec \otimes \mvec \right) + v(r)\left(\pvec\otimes \pvec - \frac{\mathbf{I}}{3} \right),
 \end{equation}
 where $\mvec = \left(-\sin\theta, \cos \theta, 0 \right)$ and $\pvec = (0,0,1)$.
 The critical points (\ref{eq:c14a}) are referred to as $k$-radially symmetric solutions in \cite{fritta,fritta2} with $k=2$; we note that the authors normalize the tensors in (\ref{eq:c14a}) whilst we choose not to normalize the tensors for ease of presentation.
It is straightforward to verify that solutions of the form (\ref{eq:c14a}) exist if the functions $u$ and $v$ satisfy the following system of coupled second-order ordinary differential equations
 \begin{eqnarray}
 \label{eq:c16}
&&u''(r) +\frac{u'(r)}{r} -\frac{4u(r)}{r^2} =\frac{u}{L}\left(\frac{B^2}{27C}  +\frac{2}{3}Bv+C\left(\frac{u^2}{2}+\frac{2v^2}{3}\right) \right),\\
\label{eq:c17}
&&v''(r) +\frac{v'(r)}{r}= \frac{v}{L}\left(\frac{B^2}{27C} -\frac{Bv}{3}+C\left(\frac{u^2}{2}+\frac{2v^2}{3}\right) \right)+\frac{1}{4L}Bu^2,
\end{eqnarray}
with $u(0)=v'(0)=0$ with $u(R)={B}/{3C}$ and $v(R)=-{B}/{6C}$, consistent with the boundary condition $\Qvec=\Qvec_c$ on $r=1$. As in \cite{fritta}, we can prove the existence of a solution pair $(u,v)$ of (\ref{eq:c16})-(\ref{eq:c17}) by appealing to a variational problem.
Define the energy
\begin{align}
\label{uvenergy}
\mathcal{E}(u,v)=\int^1_0& \bigg( \left( \frac{1}{4}(u')^2+\frac{1}{3}(v')^2+\frac{1}{r^2}u^2 \right) +\frac{B^2}{54CL}\left(\frac{u^2}{2}+\frac{2v^2}{3}\right) \nonumber\\
&+\frac{C}{L}\left(\frac{u^4}{16}+\frac{u^2 v^2}{6}+\frac{v^4}{9}\right)-\frac{B}{3L} v\left(\frac{2v^2}{9}-\frac{u^2}{2}\right)\bigg) \, r\mathrm{d}r.
\end{align}
This is the LdG energy of the $(u,v)$-ansatz in (\ref{eq:c14a}), defined on the admissible set
$S=\{ (u,v):[0,1] \to \mathbb{R}^2 | \sqrt{r} u', \, \sqrt{r}v',\,{u}/{\sqrt{r}},\,\sqrt{r}v \in L^2(0,1),\,u(1)={B}/{3C},\,v(1)=-{B}/{6C}\}$. The proof of the following lemma is standard and omitted here for brevity (see \cite{fritta}).
\\
\begin{lemma}
For each $L>0$,  there exists a global minimiser $(u,v) \in [C^{\infty}(0,1) \cap C([0,1])] \times [C^{\infty}(0,1)\cap C^1([0,1])]$ of the energy \eqref{uvenergy} on S, which satisfies the ODEs for $u$ and $v$ in \eqref{eq:c16}-\eqref{eq:c17}.
\end{lemma}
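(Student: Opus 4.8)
The plan is to prove existence by the direct method of the calculus of variations, and then to recover the Euler--Lagrange system and interior smoothness by standard bootstrapping. The natural function space is exactly the weighted Sobolev space built into the definition of $S$, with weight $r$: the four conditions $\sqrt{r}u',\sqrt{r}v',u/\sqrt{r},\sqrt{r}v\in L^2(0,1)$ are precisely the finiteness requirements for the gradient and quadratic part of $\mathcal{E}$. First I would show that $\mathcal{E}$ is bounded below on $S$. The only term in \eqref{uvenergy} without a definite sign is the cubic $-\tfrac{B}{3L}v\left(\tfrac{2v^2}{9}-\tfrac{u^2}{2}\right)=-\tfrac{2B}{27L}v^3+\tfrac{B}{6L}vu^2$; applying Young's inequality, each monomial is dominated by the quartic terms $\tfrac{C}{L}\left(\tfrac{u^4}{16}+\tfrac{u^2v^2}{6}+\tfrac{v^4}{9}\right)$ (and, for the $vu^2$ piece, a fraction of the positive quadratic potential) up to an additive constant, so $\mathcal{E}(u,v)\geq -K$ with $K$ independent of $(u,v)\in S$.

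Next I would take a minimizing sequence $(u_n,v_n)\subset S$ with $\mathcal{E}(u_n,v_n)\to\inf_S\mathcal{E}$. From the uniform bound $\mathcal{E}(u_n,v_n)\leq M$ and the preceding absorption, the surviving nonnegative terms are controlled: $\sqrt{r}u_n'$, $\sqrt{r}v_n'$, $u_n/\sqrt{r}$ are bounded in $L^2(0,1)$, while the quadratic and quartic potential terms together bound $\int_0^1 v_n^2\,r\,dr$ and $\int_0^1(u_n^4+v_n^4)\,r\,dr$. Extracting a subsequence converging weakly in the weighted space to a limit $(u,v)$, the gradient-plus-quadratic part of $\mathcal{E}$ is a sum of squares, hence convex and weakly lower semicontinuous. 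For the potential part I would use compactness: on every $[\delta,1]$ the weight is bounded above and below, so the weighted norm is equivalent to the standard $H^1[\delta,1]$ norm, and the one-dimensional Rellich theorem gives $u_n\to u$, $v_n\to v$ strongly, indeed in $C^0[\delta,1]$. Together with the uniform $L^4$ bounds this lets the sign-indefinite cubic and the quartic integrals over $[\delta,1]$ pass to the limit, while the tail on $[0,\delta]$ is uniformly small by equi-integrability coming from the $L^4$ control and the vanishing weight. Letting $\delta\to 0$ yields $\mathcal{E}(u,v)\leq\liminf\mathcal{E}(u_n,v_n)$, so $(u,v)$ is a minimizer. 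Closedness of $S$ under this convergence — the boundary values $u(1)=B/3C$, $v(1)=-B/6C$ survive via the $C^0[\delta,1]$ trace at $r=1$, and the integrability constraints survive by weak lower semicontinuity of the relevant norms — places $(u,v)\in S$.

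With a minimizer in hand, the first variation against radial test functions compactly supported in $(0,1)$ gives the weak form of \eqref{eq:c16}--\eqref{eq:c17}. On any compact subinterval of $(0,1)$ the coefficients $1/r$ and $4/r^2$ are smooth and the nonlinearities are polynomial, so a standard bootstrap (the system is a regular second-order ODE system away from the origin) upgrades the weak solution to $C^\infty(0,1)$. It then remains to analyze the endpoints: at $r=1$, a regular point, the smooth interior solution extends with the prescribed Dirichlet data; at $r=0$, the finiteness of $\int_0^1 u^2/r\,dr$ forces $u(0)=0$ and continuity of $u$ there, while near the origin the $v$-equation is the radial Laplacian $v''+v'/r$ with bounded right-hand side, from which $v$ extends to a $C^1$ function satisfying the natural condition $v'(0)=0$.

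I expect the main obstacle to be the degenerate weight at $r=0$. The singular coefficients $1/r$ and $4/r^2$ mean the energy space is genuinely weighted, so I must (i) verify that coercivity supplies enough control near the origin to extract and identify the limit there, and (ii) because the cubic term carries no sign, secure strong rather than merely weak convergence of the minimizing sequence — which is exactly where the compact embedding on $[\delta,1]$ together with equi-integrability of the quartic terms near $0$ does the work. The companion task of reading off the boundary behavior $u(0)=0$ and $v'(0)=0$ from the ODE structure and the integrability encoded in $S$ is the remaining delicate point, and it parallels the treatment in \cite{fritta}.
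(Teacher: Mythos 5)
The paper offers no proof of this lemma at all --- it is declared ``standard'' and deferred to \cite{fritta} --- and your direct-method argument (coercivity by absorbing the sign-indefinite cubic into the quartic terms, weak compactness in the weighted space, lower semicontinuity via the compact embedding on $[\delta,1]$ plus uniform smallness of the tail near $r=0$, then the Euler--Lagrange system and interior bootstrap) is exactly that standard route, so your approach and conclusion match the paper's. One caution for the endpoint analysis, which you rightly flag as the delicate point: near $r=0$ the energy space only gives $v(r)=O\bigl(\sqrt{\ln(1/r)}\bigr)$, so the right-hand side of the $v$-equation is not \emph{a priori} bounded as you assert; you must first integrate $(rv')'=rF$ and use $\int_0^1 r(v')^2\,\mathrm{d}r<\infty$ to rule out the logarithmic mode (or invoke the maximum-principle $L^\infty$ bound on the minimizer), after which $v\in C^1$ at the origin follows --- and, similarly, $u(0)=0$ uses $\sqrt{r}\,u'\in L^2$ together with $u/\sqrt{r}\in L^2$, not the latter alone.
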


Next, we look at some qualitative properties of the $(u,v)$-solutions. Similar questions have been addressed in the recent paper \cite{fritta2}, in the temperature regime $A<0$, with the exception of the monotonicity argument in Lemma~\ref{lem:mon} below and we reproduce all necessary details for completeness. 
\\
\begin{lemma}
Let $(u,v)$ be a global minimizer of the energy $\mathcal{E}$ in (\ref{uvenergy}) subject to $u(1)=\frac{B}{3C}$ and $v(1) = -\frac{B}{6C}$. Then $0\leq u\leq \frac{B}{3C}$ and $-\frac{B}{6C}\leq v\leq 0$ for $0\leq r \leq 1$.
\end{lemma}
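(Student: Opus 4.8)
The plan is to prove all four bounds by the \emph{truncation method}: I replace a global minimizer $(u,v)$ by its pointwise projection onto the rectangle $\mathcal{R}=[0,B/3C]\times[-B/6C,0]$ and show that this projection never increases $\mathcal{E}$, with a strict decrease whenever a bound is violated on a set of positive measure, contradicting minimality. The projection is carried out one coordinate at a time, in an order chosen to avoid circular dependence between the bounds. Throughout I use the a priori bound $|\Qvec|\le\sqrt{2/3}\,(B/3C)$ from the proof of Proposition~\ref{prop:1}, which in the $(u,v)$-variables reads $\tfrac{u^2}{2}+\tfrac{2v^2}{3}\le\tfrac{2}{3}(B/3C)^2$ and hence confines $(u,v)$ to the compact set $|u|\le\tfrac{2}{\sqrt3}(B/3C)$, $|v|\le B/3C$. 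Writing the non-derivative part of the integrand of $\mathcal{E}$ as $W(u,v)$, I note that $W$ is \emph{even} in $u$ (every $u$-term, including the cross terms $u^2v^2$ and $vu^2$, is even), and that the gradient terms $\tfrac14(u')^2+\tfrac13(v')^2$ and the weight term $u^2/r^2$ can only decrease under coordinatewise clamping, since clamping is $1$-Lipschitz and reduces $|u|,|v|$ pointwise; so in each step only the sign of the change in $\int W\,r\,\mathrm{d}r$ must be controlled.

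First I would establish $u\ge0$. Since $\mathcal{E}(|u|,v)=\mathcal{E}(u,v)$ (using $(|u|')^2=(u')^2$ a.e.) and $(|u|,v)\in S$ because $u(1)=B/3C>0$, the pair $(|u|,v)$ is also a global minimizer and, by the preceding Lemma, a smooth solution of \eqref{eq:c16}. An interior zero of $u$ with $u'\ne0$ would make $|u|$ non-differentiable there, while a zero with $u'=0$ would force $u\equiv0$ by uniqueness for the linear second-order ODE \eqref{eq:c16}; as $u(1)\ne0$, $u$ has no interior zeros, so $u>0$ on $(0,1)$.

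Next I would clamp the $v$-component. For the upper bound I set $\tilde v=\min(v,0)$ and show $W(u,v)\ge W(u,0)$ for every $v>0$ and every $u\ge0$: a short factorization gives $W(u,v)-W(u,0)=\tfrac{C}{L}\,v\,[\,\cdots\,]$, where at $u=0$ the bracket equals $\tfrac{v}{9}(v-B/3C)^2\ge0$ and the remaining $u$-dependent contributions $\tfrac{u^2v}{6}+\tfrac{B}{6C}u^2$ are positive. Thus the clamp does not increase the bulk energy, and it strictly lowers the total energy if $\{v>0\}$ has positive measure, since $v(1)=-B/6C<0$ forces a transition region on which $v\in(0,B/3C)$, where either the potential (via $(v-B/3C)^2>0$) or the gradient strictly decreases; minimality gives $v\le0$, hence $v\in[-B/3C,0]$. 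For the lower bound I set $\tilde v=\max(v,-B/6C)$ and must check $W(u,v)\ge W(u,-B/6C)$ for $-B/3C\le v<-B/6C$; using the a priori bound on $u$ this reduces to the scalar inequality $\tfrac{v^2(v-B/3C)^2}{9}-\tfrac{(B/3C)^4}{16}+\tfrac{u^2}{6}\bigl(v+\tfrac{B}{6C}\bigr)\bigl(v+\tfrac{5B}{6C}\bigr)\ge0$, which I verify on the compact range (it vanishes at $v=-B/6C$ and stays positive to its left), yielding $v\ge-B/6C$. Finally, with $v\in[-B/6C,0]$ in hand, I clamp $\tilde u=\min(u,B/3C)$: for $u\ge B/3C$ one has $\partial_uW=\tfrac{C}{L}u\bigl(\tfrac{u^2}{4}+\tfrac{(B/3C)^2}{6}+\tfrac{v^2}{3}+\tfrac{B}{3C}v\bigr)\ge\tfrac{C}{L}u\bigl(\tfrac{u^2}{4}-\tfrac{(B/3C)^2}{4}\bigr)\ge0$, so $W$ does not increase and $u\le B/3C$.

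The main obstacle is the sign control of the bulk potential under truncation of $v$. Because of the odd-in-$v$ cubic terms, $W$ is \emph{not} monotone toward $\mathcal{R}$, and it possesses a competing degenerate well at $v=+B/3C>0$ (the ``escaped'' uniaxial state with director along $\pvec$), so neither a naive potential-monotonicity argument nor the maximum principle applied to \eqref{eq:c17} excludes positive $v$. This is overcome by working with the global value comparison $W(u,\cdot)\ge W(u,0)$ instead of with derivative signs, and by sequencing the clamps carefully—$v\le0$ first (valid for all $u\ge0$), then $v\ge-B/6C$ using only the crude a priori bound on $u$, and only then $u\le B/3C$ using the sharp bounds on $v$—so that no step presupposes a bound proved later.
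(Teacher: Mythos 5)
Your treatment of the first two bounds is essentially the paper's own proof: the paper also gets $u\ge 0$ from the symmetry $\mathcal{E}[u,v]=\mathcal{E}[-u,v]$ plus a strong-maximum-principle/unique-continuation argument for (\ref{eq:c16}), and its proof of $v\le 0$ is exactly your clamp $\tilde v=\min(v,0)$ — your factorized bracket, multiplied out, is precisely the integrand of (\ref{eq:c19}). Where you genuinely diverge is in the remaining two bounds: the paper proves $v\ge -B/6C$ (and then $u\le B/3C$ ``in the same manner'') not by truncation but pointwise, at an interior extremum: at a minimum $r_0$ of $v$ the left side of (\ref{eq:c17}) is $\ge 0$, while the maximum-principle bound (\ref{eq:c21}) forces $u^2(r_0)<B^2/9C^2$ and the estimates (\ref{eq:c20}) make the right side strictly negative. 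Your clamping alternative for these bounds is legitimate in principle (it uses only minimality, not the ODEs), and your computation of $\partial_u W$ for the final clamp $u\le B/3C$ is correct once $v\in[-B/6C,0]$ is known. Note also that the a priori bound you invoke should be attributed to the maximum principle for critical points, i.e. (\ref{eq:c21}) via the ansatz (\ref{eq:c14a}) solving (\ref{eq:c15}), rather than to the parabolic estimate in Proposition~\ref{prop:1}.

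There is, however, a genuine gap in your $v\ge-B/6C$ step as you describe it: you claim it works ``using only the crude a priori bound on $u$'', i.e. $|u|\le\tfrac{2}{\sqrt3}(B/3C)$. That is false. Write $h=B/3C$. Your (correctly factorized) comparison expression is
\begin{equation*}
\frac{v^2(v-h)^2}{9}-\frac{h^4}{16}+\frac{u^2}{6}\left(v+\frac{h}{2}\right)\left(v+\frac{5h}{2}\right),
\end{equation*}
and the coefficient of $u^2$ is \emph{negative} on all of $-h\le v<-h/2$, so the worst case is $u^2$ maximal. Taking the crude bound $u^2=\tfrac43 h^2$ and $v=-0.6h$ gives $0.1024\,h^4-0.0625\,h^4-0.0422\,h^4\approx-0.002\,h^4<0$: the pointwise inequality $W(u,v)\ge W(u,-h/2)$ fails and the truncation argument collapses. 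The step is rescued only by the \emph{joint} bound (\ref{eq:c21}), $\tfrac{u^2}{2}+\tfrac{2v^2}{3}\le\tfrac23 h^2$, i.e. $u^2\le\tfrac43(h^2-v^2)$ — exactly the ingredient the paper uses to get $u^2(r_0)<B^2/9C^2$. Substituting this worst case and setting $v=-th$, $t\in[\tfrac12,1]$, your expression becomes
\begin{equation*}
\frac{h^4}{9}\left(-t^4+8t^3+\frac{t^2}{2}-6t+\frac52\right)-\frac{h^4}{16},
\end{equation*}
which vanishes to second order at $t=\tfrac12$ and is increasing on $[\tfrac12,1]$ (its $t$-derivative $-4t^3+24t^2+t-6$ vanishes at $t=\tfrac12$ and is itself increasing there and beyond), hence strictly positive for $v<-h/2$; this strictness also supplies the energy decrease needed for the contradiction with minimality. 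So your route does go through, but only after replacing the crude box bound by the joint constraint — the verification as you stated it would fail, and it fails precisely where the inequality is tight.
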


\begin{proof}
We can prove the non-negativity of $u$ by following the arguments in \cite{fritta, fritta2} (the authors use the symmetry $E\left[u, v \right] = E\left[ -u, v \right]$ and the strong maximum principle for (\ref{eq:c16}) to deduce that $u\geq 0$ since $u(1)>0$). We assume $v(r_1) = v(r_2) =0$ and $v>0$ for $r_1 < r < r_2$. Define the perturbation
\begin{eqnarray}
\label{eq:c18}
\bar{v}(r) =
\begin{cases}
v(r) & 0\leq r \leq r_1 \\
0 & r_1 < r < r_2 \\
v(r) & r_2 \leq r \leq 1.
\end{cases}
\end{eqnarray}
A direct computation shows that
\begin{equation}
\label{eq:c19}
E[u,v] - E[u, \bar{v}] = \int_{r_1}^{r_2} r \frac{v_r^2}{3} + \frac{r}{L}\frac{C v^2}{9}\left(v - \frac{B}{3C}\right)^2 + r \frac{u^2}{6 L}\left( B v + C v^2 \right)~ \mathrm{d}r>0
\end{equation}
contradicting the global minimality of the pair $(u,v)$.
Next, let us assume for a contradiction that the minimum value of $v:[0,1] \to \Rr$, say $v_{\min} < -\frac{B}{6C}$ at  $r=r_0$. At $r=r_0$, the left-hand side of (\ref{eq:c17}) is non-negative by definition of a minimum. From the maximum principle (see \cite{ejam2010}, \cite{amaz}), we have
\begin{equation}
\label{eq:c21}
|\Qvec|^2 = \frac{u^2}{2} + \frac{2 v^2}{3} \leq \frac{2}{3}\left(\frac{B^2}{9C^2}\right).
\end{equation}
If $v_{\min} < -\frac{B}{6C}$, then $u^2(r_0) < \frac{B^2}{9C^2}$.
Then for $v_{min} < -\frac{B}{6C}$, we have
\begin{eqnarray}
\label{eq:c20}
&& \frac{B^2}{27 C} v_{min} - \frac{B}{3}v_{min}^2 + \frac{2C}{3} v_{min}^3 < -\frac{B^3}{54 C^2} \quad \textrm{and} \nonumber \\ && \frac{u^2}{4}\left( B + 2C v_{min} \right) < \frac{B^3}{54 C^2}
\end{eqnarray} so that the right-hand side of (\ref{eq:c17}) is negative, yielding a contradiction. The result for $u$ follows in the same manner, using the bound for $v$ proven above.
\end{proof}

We next show that $(u,v)$ are monotone functions, borrowing an idea from \cite{lamyradial}. We make the elementary observation that $u' > 0$ for $0< r < \sigma$, since $u$ attains its minimum value at $r=0$.

\begin{eqnarray}
\label{eq:c22}
u_\eps(r) = u(r)+\eps \alpha(r), \quad v_\eps(r) = v(r) + \eps \beta(r)
\end{eqnarray}
with $\alpha\left(1 \right) = \beta\left(1 \right) = 0$.
In this case the second variation is given by
\begin{eqnarray}
\label{eq:c23}
\delta^2 E\left[\alpha, \beta \right]: = &&\int_{0}^{1}r \left[ \frac{\alpha_r^2}{4} + \frac{\beta_r^2}{3} + \frac{\alpha^2}{r^2} \right]+ \frac{r}{L}\left[\frac{B^2}{27 C}\frac{\beta^2}{3} - \frac{2Bv\beta^2}{9} + \frac{2 C}{3}v^2 \beta^2 \right]~\mathrm{d}r \nonumber \\
&& + \int_{0}^{1} \frac{r}{L} \left[ \frac{C}{6}u^2 \beta^2 + \frac{B}{6}v\alpha^2 + \frac{C}{6}v^2 \alpha^2 + \frac{B u \alpha \beta}{3} + \frac{ 2C u v \alpha \beta}{3} \right]~\mathrm{d}r+ \nonumber \\ && +\int_{0}^{1}\frac{r}{L}\left[ \frac{B^2}{108 C}\alpha^2 + \frac{3 C}{8}u^2 \alpha^2 \right]~\mathrm{d}r.
\end{eqnarray}
In particular, $\delta^2 E\left[\alpha, \beta \right] \geq 0$ for all admissible $\alpha, \beta$ by the global minimality of $(u,v)$.
\\
\begin{lemma}
\label{lem:mon}
Let $(u,v)$ be a global minimizer of the energy in (\ref{uvenergy}). Then $u' >0$ for $r>0$ and $v' < 0$ for $r>0$.
\end{lemma}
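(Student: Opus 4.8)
The plan is to prove the two monotonicity statements by exploiting the global minimality of $(u,v)$ — equivalently, the non-negativity $\delta^2 E[\alpha,\beta]\ge 0$ of the second variation in (\ref{eq:c23}) for all admissible $\alpha,\beta$ with $\alpha(1)=\beta(1)=0$ — together with the pointwise bounds $0\le u\le \frac{B}{3C}$ and $-\frac{B}{6C}\le v\le 0$ established above. The first step is to differentiate the Euler--Lagrange system (\ref{eq:c16})--(\ref{eq:c17}) in $r$, so that $p:=u'$ and $q:=v'$ satisfy a coupled \emph{linear} system: the Jacobi (linearised) equations associated with $\delta^2E$, augmented by the explicit lower-order terms produced by the $1/r$ and $1/r^2$ weights. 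Thus $(p,q)$ plays the role of a Jacobi field, and the guiding principle, borrowed from \cite{lamyradial}, is that an interior sign change of $p$ or $q$ can be converted, by inserting a localised cut-off of $(p,q)$ into $\delta^2E$, into an admissible perturbation with strictly negative second variation, contradicting $\delta^2E\ge 0$.

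For $u'>0$ I would build on the elementary observation already recorded, that $u'>0$ for $0<r<\sigma$ because $u(0)=0$ is the minimum of $u$ while $u>0$ for $r>0$ by the strong maximum principle. Writing the $u$-equation in divergence form as $(ru')'=\frac{u}{r}\bigl(4+\frac{r^2}{L}g\bigr)$, with $g=\frac{B^2}{27C}+\frac{2}{3}Bv+C\bigl(\frac{u^2}{2}+\frac{2v^2}{3}\bigr)$, shows that $ru'$ is non-decreasing — hence $u'>0$ — on any interval where the bracket is non-negative, and since $ru'\to 0$ as $r\to 0$ this propagates positivity outward from the origin. Letting $r_1\in(0,1)$ be a putative first zero of $u'$ (a first interior maximum of $u$), the equation gives $u''(r_1)=u(r_1)\bigl(\frac{4}{r_1^2}+\frac{g(r_1)}{L}\bigr)$, which is positive and contradicts $u''(r_1)\le 0$ \emph{provided} $g(r_1)>-4L/r_1^2$. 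The analogous step for $v'<0$ uses that the global minimum of $v$ is attained at $r=1$ and $v'(0)=0$: in divergence form $(rv')'=\frac{r}{L}\bigl(vh+\tfrac14 Bu^2\bigr)$ with $h=\frac{B^2}{27C}-\frac{Bv}{3}+C\bigl(\frac{u^2}{2}+\frac{2v^2}{3}\bigr)>0$, so $v'<0$ holds on any interval where $vh+\frac14Bu^2\le 0$, which is clear near the origin where $u\approx 0$ and $v<0$.

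The hard part will be precisely these sign conditions away from the origin. The reaction coefficients are indefinite on the admissible box: $g$ can attain $-B^2/18C$, and $vh+\frac14Bu^2$ becomes positive when $u\approx \frac{B}{3C}$ and $v\approx 0$, so the scalar maximum principle alone does not close the argument for $r$ bounded away from $0$, and the coupling between $p$ and $q$ obstructs a single-equation Sturm comparison. This is where minimality must enter. I expect the decisive step to be to rule out the \emph{first} interior sign change of the Jacobi field: on the interval to one side of it, where the signs of $p$ and $q$ are fixed, one truncates $(p,q)$ into an admissible test pair, uses the differentiated Euler--Lagrange equations to integrate by parts in $\delta^2E$, and collapses the quadratic form to a manifestly negative localised expression — the analogue of the nodal-domain/conjugate-point argument in \cite{lamyradial} — contradicting $\delta^2E\ge 0$.

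The technical obstacle within this step is twofold: controlling the error terms that arise because $(p,q)$ is only an \emph{approximate} Jacobi field (the non-autonomous $1/r$ and $1/r^2$ weights spoil exact translation invariance), and handling the genuine $u$--$v$ coupling in the quadratic form. I would manage both by localising near the first sign change, where one factor of $(p,q)$ has a definite sign, and by absorbing the weight-induced and cross terms using the bounds $0\le u\le\frac{B}{3C}$, $-\frac{B}{6C}\le v\le0$ together with the monotonicity already secured nearer the origin, so that smallness of $L$ makes the reaction contribution dominate and forces negativity of the localised second variation. Once no interior sign change is possible, $u'>0$ and $v'<0$ for all $r>0$ follow, with strictness coming from the strong maximum principle applied to $p$ and $q$.
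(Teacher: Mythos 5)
Your overall skeleton is the same as the paper's: differentiate the Euler--Lagrange system (\ref{eq:c16})--(\ref{eq:c17}), treat $(u',v')$ as an (approximate) Jacobi field, test the second variation (\ref{eq:c23}) with the wrong-sign parts of $(u',v')$, and contradict $\delta^2E\ge 0$, exactly in the spirit of \cite{lamyradial}. The paper does this by taking intervals where $u'<0$ and where $v'>0$, multiplying the differentiated ODEs by $ru'$ and $rv'$, integrating to get the identities (\ref{eq:c25})--(\ref{eq:c26}), and substituting $\alpha=u'\chi_{\{u'<0\}}$, $\beta=v'\chi_{\{v'>0\}}$ into $\delta^2E$. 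The genuine gap in your proposal is in how you plan to extract the negativity. When the identities (\ref{eq:c25})--(\ref{eq:c26}) are subtracted from $\delta^2E[\alpha,\beta]$, \emph{every reaction term cancels identically} (they occur with the same coefficients in both), and what survives is precisely the contribution of the non-autonomous weights:
\begin{equation*}
\delta^2E[\alpha,\beta]=\int_{\{u'<0\}}\left(-\frac{(u')^2}{4r}+\frac{2uu'}{r^2}\right)\mathrm{d}r\;+\;\int_{\{v'>0\}}\left(-\frac{(v')^2}{3r}\right)\mathrm{d}r\;+\;\text{(cross terms)},
\end{equation*}
which is strictly negative: $u\ge 0$ and $u'<0$ on the first set, so $2uu'/r^2\le 0$; the cross terms carry the factor $uu'v'(B+2Cv)$, cancel exactly on the overlap $\{u'<0\}\cap\{v'>0\}$ (coefficients $-\tfrac16-\tfrac16+\tfrac13$), and off the overlap have a favourable sign because there $uu'v'\ge 0$ while $B+2Cv\ge 2B/3>0$ by the bound $v\ge -B/6C$. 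So the $1/r$ and $1/r^2$ terms you describe as ``error terms \ldots\ to be absorbed'' are not errors at all: they are the entire source of the contradiction, and the argument works for every $L>0$.

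Your fallback mechanism --- ``smallness of $L$ makes the reaction contribution dominate and forces negativity'' --- fails on two counts. First, the lemma carries no small-$L$ hypothesis: it is asserted, and proved in the paper, for each fixed $L>0$, so a small-$L$ argument would only establish a strictly weaker statement. Second, the reaction coefficients in (\ref{eq:c23}) are sign-indefinite on the admissible box $0\le u\le B/3C$, $-B/6C\le v\le 0$ (for instance the $\alpha^2$-coefficient $\tfrac{B}{6}v+\tfrac{C}{6}v^2+\tfrac{B^2}{108C}+\tfrac{3C}{8}u^2$ can take either sign), so sending $L\to 0$ does not force $\delta^2E<0$; depending on where the test pair is supported it can just as well push $\delta^2E$ up. The only way to close the argument is the exact cancellation above, which your write-up never identifies; as described, the ``hard part'' of your proof does not go through. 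Your maximum-principle preliminaries near the origin (divergence form $(ru')'$, first-zero argument) are correct as far as they go but, as you concede, remain conditional on sign hypotheses for the reaction coefficients and so cannot replace the variational step; the paper uses only the elementary observation that $u'>0$ near $r=0$. Your remark that strictness of the inequalities follows from the strong maximum principle applied to $u'$, $v'$ is a reasonable supplement, and is a point the paper itself leaves implicit.
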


\begin{proof}
We assume for a contradiction that $u$ and $v$ are not monotone, so that there exist points $r_1, r_2, r_3, r_4 $ with $r_1, r_2 \in \left(0, 1 \right)$, $r_3\in \left[0,1 \right)$ such that
\begin{eqnarray}
\label{eq:c24}
&& u'(r_1) = u'(r_2) = 0 \quad \textrm{$u'<0$ for $r_1 < r < r_2$} \nonumber \\
&& v'(r_3) = v'(r_4) = 0 \quad \textrm{$v' >0$ for $r_3 < r < r_4$.}
\end{eqnarray}
We differentiate the ODEs for $u$ and $v$ in (\ref{eq:c16})-(\ref{eq:c17}), multiply by $r u'$ and $r v'$ respectively, integrate over $r\in[r_1, r_2]$ and $r \in [r_3, r_4]$ to get the following equalities:
\begin{eqnarray}
\label{eq:c25}
 \int_{r_1}^{r_2} \frac{r}{4}(u^{''})^2 + &&\frac{5}{4r}(u')^2 - \frac{2}{r^2} u u' + \frac{r}{L}\bigg[\frac{B^2}{108 C}(u')^2 + \frac{3C}{8}u^2 (u')^2 \nonumber\\
 &&+ \frac{C}{6}(u')^2 v^2 + \frac{B}{6}v (u')^2 + \frac{u u' v'}{6}(B + 2 C v) \bigg]~\mathrm{d}r = 0, 
\end{eqnarray}
and
\begin{eqnarray}
\label{eq:c26}
 \int_{r_3}^{r_4}\frac{r}{3}( v^{''})^2 + &&\frac{(v')^2}{3r} + \frac{r}{L}\left[\frac{B^2}{81 C}(v')^2- \frac{2B}{9}v (v')^2 + \frac{2C}{3}v^2(v')^2 \right]\nonumber \\ && + \frac{r}{L}\left[ \frac{C}{6}u^2 (v')^2 + \frac{u u' v'}{6}\left( B + 2 C v \right) \right]~\mathrm{d}r = 0.
\end{eqnarray}
We define the perturbations $\alpha, \beta$ as follows:
\begin{eqnarray}
\alpha(r) = \begin{cases} 0 & \textrm{if $u'\geq 0$} \\ u' & \textrm{if $u'< 0$},
\end{cases} \qquad
\beta(r) = \begin{cases} 0 & \textrm{if $v'\leq 0$} \\ v' & \textrm{if $v'> 0$} .
\end{cases}
\end{eqnarray}
These perturbations satisfy $\alpha(1) = \beta(1) = 0$, since $u$ and $v$ attain their maximum and minimum values respectively, on $r=1$.
Substituting $(\alpha, \beta)$ in (\ref{eq:c23}) and using (\ref{eq:c25})-(\ref{eq:c26}), we obtain $\delta^2 E\left[\alpha, \beta \right] <0$ and the required contradiction.
\end{proof}

Finally, we demonstrate that this class of critical points is unstable in the static sense, at the nematic-isotropic transition temperature.
We consider a perturbation about the critical point in (\ref{eq:c14a}), $\mathbf{W}_{ij}=\Qvec_{ij}+\epsilon \mathbf{V}_{ij}$ with
$\mathbf{V}_{ij}=0$ on $r=1$ and compute the second variation of the LdG energy about this critical point as shown below
\begin{equation}\label{eq:c27}
\delta^2 I=\int \int \int \frac{1}{2}|\nabla
\mathbf{V}|^2+\frac{A}{2L}|\mathbf{V}|^2
-\frac{B}{L}\Qvec_{ij}\mathbf{V}_{jp}\mathbf{V}_{pi}+\frac{C}{L}(\Qvec\cdot
\mathbf{V})^2+\frac{C}{2} |\Qvec|^2|\mathbf{V}|^2\, \mathrm{d}V.
\end{equation}
We simply use the perturbation
\begin{equation}
\label{eq:per}
\mathbf{V} = \frac{100r^2 (1 - r^2)^2}{(1 +100 r^2)}(\nvec_1\otimes \pvec
 +\pvec \otimes \nvec_1)
 \end{equation}
 in (\ref{eq:c27}). 
This integral is evaluated numerically using numerical solutions to (\ref{eq:c16}) - (\ref{eq:c17}) and  $\delta^2 I < 0$ for $\log_{10} L < -1.6$, as illustrated in the graph below.
\begin{figure}[h!]
\centering
\centerline{\includegraphics[width=5cm]{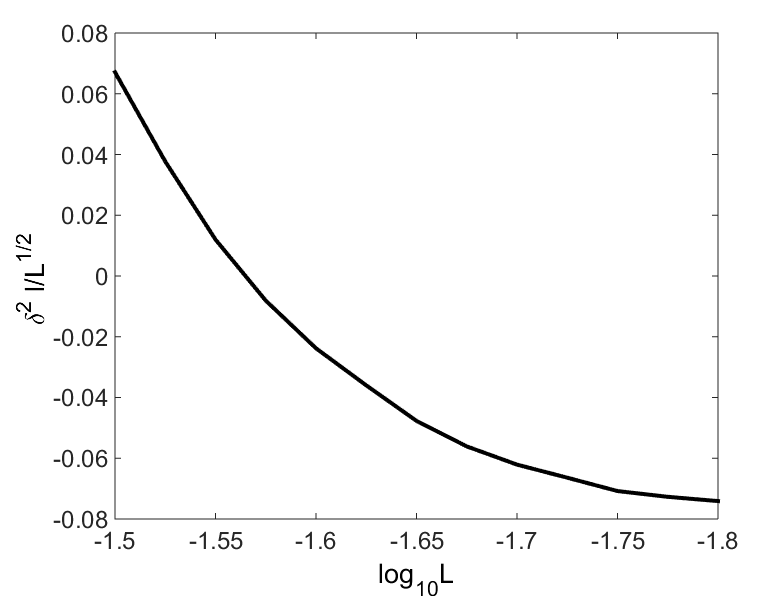}}
\caption{$\delta^2 I$ for various L.}
\label{secondvariation}
\end{figure}

\subsection{Numerical simulations}
\label{sec:dynamic}
We numerically solve the LdG gradient flow system in (\ref{eq:n1}-\ref{eq:n5}) on a disc subject to $\Qvec=\Qvec_c$ on $r=1$.
We take $R$ to be the disc radius in the definition of the dimensionless variables and the parameter values ($R$, $L$, $B$, $C$) are as in Section~\ref{sec:RH}. We take $\tilde{L}=0.01$ in this section, unless stated otherwise. The focus is on the distinction between planar and non-planar initial conditions. Based on the heuristics presented in Section~\ref{sec:uv}, we argue that all dynamic solutions  develop an interface separating an isotropic core at $r=0$, from an ordered uniaxial nematic phase, away from $r=0$. For planar initial conditions, the interface persists for all times and the dynamic solution has an isotropic core at $r=0$ whereas for non-planar initial conditions, the interface collapses at $r=0$ and the dynamic solution relaxes into the uniaxial state $\Qvec_2$ in (\ref{eq:c8}) for long times. 

Firstly, we consider planar $(u,v)$-type initial conditions of the form
\begin{equation}
\label{eq:c14}
 \Qvec(\rvec, 0) = \frac{u(r,0)}{2}\left( \nvec_1 \otimes \nvec_1 - \mvec \otimes \mvec \right) + v(r,0)\left(\pvec\otimes \pvec - \frac{\mathbf{I}}{3} \right) 
 \end{equation}
 where $\mvec = \left(-\sin\theta, \cos \theta, 0 \right)$, $\pvec = (0,0,1)$ and $\mathbf{I} = \nvec_1 \otimes \nvec_1 + \mvec\otimes \mvec + \pvec\otimes \pvec$ . Recall our boundary conditions enforce $u={B}/{3C}$ and $v=-{B}/{6C}$ on $r=1$. We let $u(r,0)$ and $v(r,0)$ have an interface structure as shown below:
\begin{eqnarray*}
u(r,0)=\frac{B}{6C}\left(1+\tanh\left(\frac{r-u_0}{\sqrt{\tilde{L}}}\right)\right),    \label{eq:c14bb1}
\,\, v(r,0)=-\frac{B}{12C}\left(1+\tanh\left(\frac{r-v_0}{\sqrt{\tilde{L}}}\right)\right), \label{eq:c14bb2}
\end{eqnarray*}
for various values of $u_0$ and $v_0$. It is worth noting that the corresponding dynamic solution is given by
\begin{equation}
\label{eq:c14aa}
\Qvec(\rvec, 0) = \frac{u(r,t)}{2}\left( \nvec_1 \otimes \nvec_1 - \mvec \otimes \mvec \right) + v(r,t)\left(\pvec\otimes \pvec - \frac{\mathbf{I}}{3} \right) 
\end{equation}
if the dynamic order parameters satisfy
\begin{eqnarray}
\label{eq:c14b}
&&\gamma u_t = u''(r) +\frac{u'(r)}{r} -\frac{4u(r)}{r^2} - \frac{u}{\tilde{L}}\left(\frac{B^2}{27C}  +\frac{2}{3}Bv+C\left(\frac{u^2}{2}+\frac{2v^2}{3}\right) \right),\\
&&\gamma v_t = v''(r) +\frac{v'(r)}{r} -  \frac{v}{\tilde{L}}\left(\frac{B^2}{27C} -\frac{Bv}{3}+C\left(\frac{u^2}{2}+\frac{2v^2}{3}\right) \right) - \frac{1}{4\tilde{L}}Bu^2.\label{eq:c14d}
\end{eqnarray}
 From Proposition~\ref{prop:1}, this is the unique solution for this model problem. In fact, we can go further and exploit the methods in \cite{bronsardstoth2} to compare the isotropic-nematic interface motion in (\ref{eq:c14aa})-(\ref{eq:c14d}) with mean curvature motion. As in Section~\ref{sec:RH}, we cannot quote results from \cite{bronsardstoth2} since the dynamic equations (\ref{eq:c14aa})-(\ref{eq:c14d}) differ from the Ginzburg-Landau model in \cite{bronsardstoth2} by the additional term $-{4u}/{r^2}$ in (\ref{eq:c14b}) above. However, for $\tilde{L}$ sufficiently small, this term  may be controllable and in Figure~\ref{uvinterface}, we plot the numerically computed interface location (plot $r^*(t)$ such that $|\Qvec(\rvec, t)|^2 < \frac{1}{3}h_+^2$ for $r<r^*(t)$) and find good agreement with mean curvature propagation for small times.


A typical solution with an initial condition of the form (\ref{eq:c14}) is shown in Figure \ref{Interfaceuvplanar}. 
If $u_0 \neq v_0$, then $\Qvec(\rvec, 0)$ is necessarily biaxial but it is hard to see the biaxial character of the initial data by looking at 
$|\boldsymbol{Q}|^2$. In order to see the rapid relaxation to uniaxiality we plot the eigenvalues of the dynamic solution, $\Qvec(\rvec, t)$, as a function of time (see Figure \ref{Biaxialityuvplanar}).
Varying the values of $u_0$ and $v_0$ does not change the qualitative dynamics: $\Qvec(\rvec, t)$ quickly becomes uniaxial for all choices of $u_0$ and $v_0$, within numerical resolution. The dynamic solution develops a radially symmetric interface separating the isotropic core, $\Qvec=0$ at $r=0$, from an ordered uniaxial nematic state (away from $r=0$) and the interface equilibrates near $r=0$ for long times. 
We have numerically computed the tensor-difference, $\Qvec(\rvec, t) - \Qvec_1$, as a function of time (where $\Qvec_1$ is defined in (\ref{eq:c8})) and find that $\Qvec(\rvec, t) \rightarrow \Qvec_1$, everywhere away from $r=0$, as expected. In particular, $\Qvec_1$ is not a solution of the LdG Euler-Lagrange equations and only describes the leading order behaviour away from $r=0$. One component of this tensor difference is plotted in 
Figure \ref{Q1Comparisonuvplanar}.

\begin{figure}
\centering
\includegraphics[width=4.3cm]{InterfaceStructureInitialSI.png}\includegraphics[width=4.3cm]{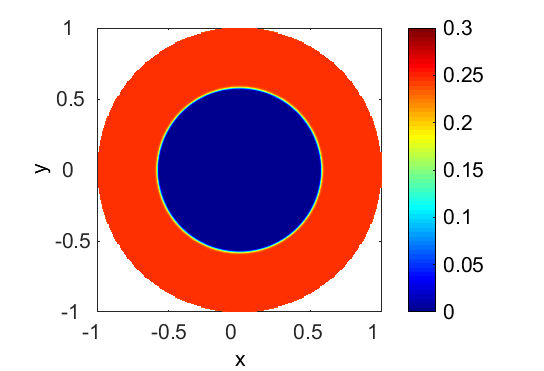}\includegraphics[width=4.3cm]{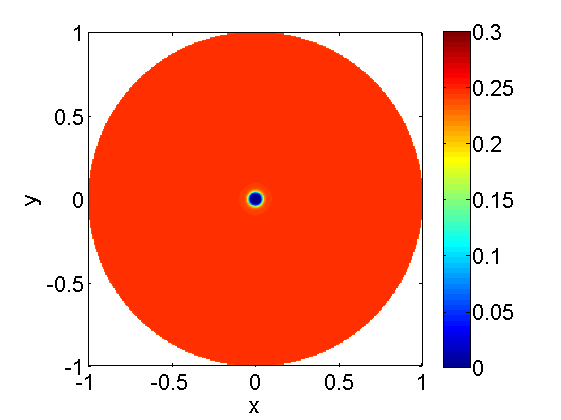}
\caption{$|\boldsymbol{Q}(\rvec,t)|^2$ on the cross section of the cylinder at $z=0$ for $u_0=0.6$, $v_0=0.4$ for the
initial condition \eqref{eq:c14}, at  $t=0$, $t=10^{-5}$, 
and $t=0.25$. The spatial resolution is $h=\frac{1}{256}$}
\label{Interfaceuvplanar}
\end{figure}


\begin{figure}
\centering
\includegraphics[width=4.3cm]{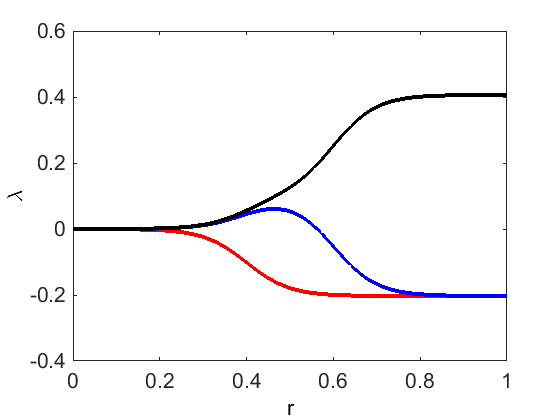}
	 \includegraphics[width=4.3cm]{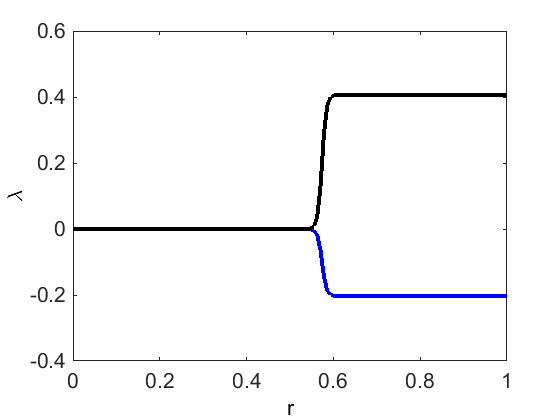}\includegraphics[width=4.3cm]{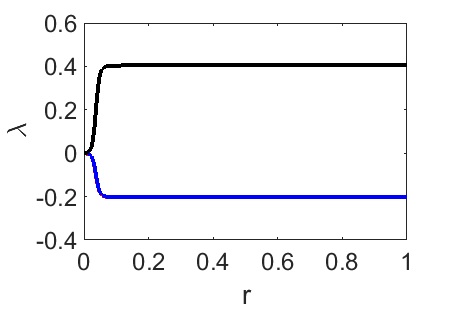}
\caption{Eigenvalues of $\boldsymbol{Q}(\mathbf{r},t)$ 
for initial condition \eqref{eq:c14} with $u_0=0.6$, $v_0 = 0.4$, at $t=0$, $t=10^{-5}$, 
 and $t=0.25$.}
\label{Biaxialityuvplanar}
\end{figure}


\begin{figure}
\centering
\includegraphics[width=4.3cm]{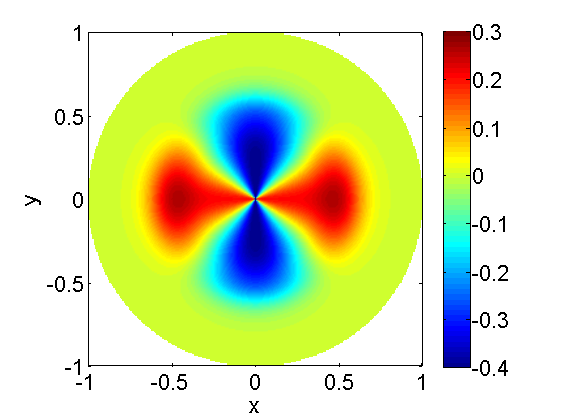}\includegraphics[width=4.3cm]{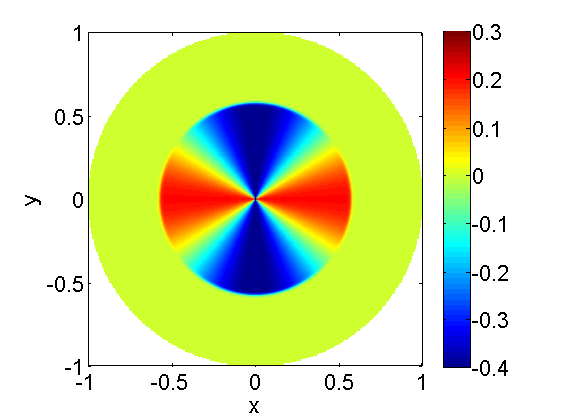}\includegraphics[width=4.3cm]{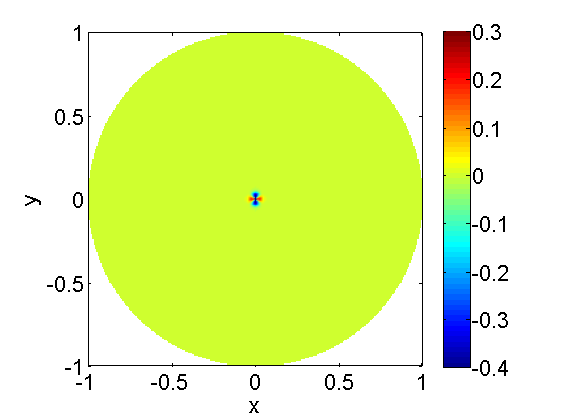}
\caption{$Q_{11}(\rvec,t)-\frac{B}{3C}(\frac{x^2}{r^2}-1/3)$ on the cross section of the cylinder at $z=0$ for the initial condition \eqref{eq:c14} with $u_0= 0.6$, $v_0 = 0.4$, at $t=0$, $t=10^{-5}$ and $t=0.25$.} \label{Q1Comparisonuvplanar}
\end{figure}

Next, we consider an initial condition of the form (\ref{eq:c14}) with
\begin{eqnarray}
\label{eq:c14c}
&& u(r,\theta,0) = \frac{B}{6C}\left( 1 + \tanh\left(\frac{r - 0.6\left(1 + 0.25 \sin 5 \theta \right)}{\sqrt{\tilde{L}}} \right) \right) \nonumber \\
&& v(r, \theta, 0) = - \frac{B}{12 C} \left( 1 + \tanh\left(\frac{r - 0.4\left(1 + 0.25 \sin 5 \theta \right)}{\sqrt{\tilde{L}}} \right) \right).
\end{eqnarray}
This is again a planar initial condition with an interesting star-shaped isotropic-nematic interface that relaxes into a radially symmetric isotropic-nematic interface. The subsequent dynamics is then indistinguishable from the case discussed above; this is illustrated by Figures~\ref{nonradialuv} and \ref{nonradialuvQ11}.

\begin{figure}
	\centering
	\includegraphics[width=4.3cm]{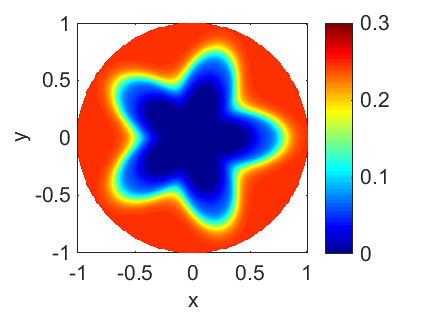}
	\includegraphics[width=4.3cm]{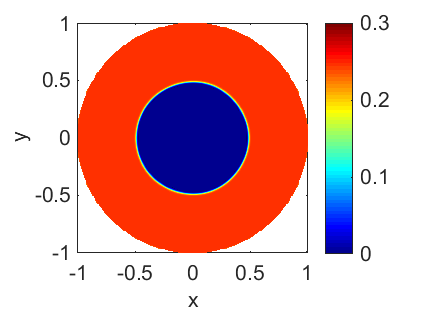}\includegraphics[width=4.3cm]{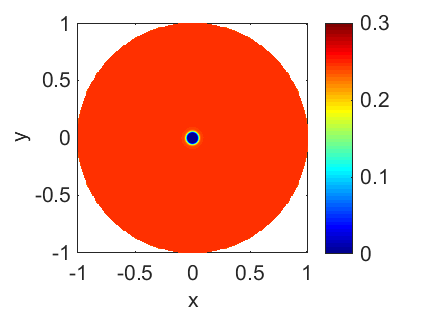}
	\caption{$|\boldsymbol{Q}(\rvec,t)|^2$ on $z=0$ for initial condition \eqref{eq:c14c} at $t=0$, $t=0.06$ and $t=0.2$. The spatial resolution is $h=\frac{1}{256}$.}
	\label{nonradialuv}
\end{figure}

\begin{figure}
	\centering
	\includegraphics[width=4.3cm]{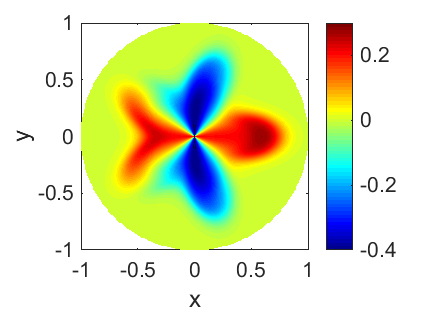}
	\includegraphics[width=4.3cm]{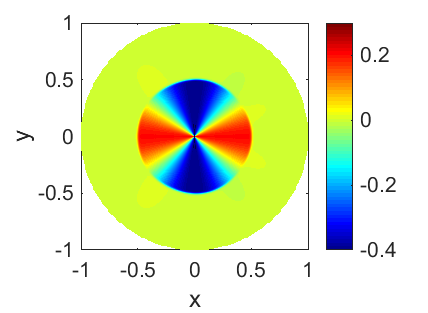}\includegraphics[width=4.3cm]{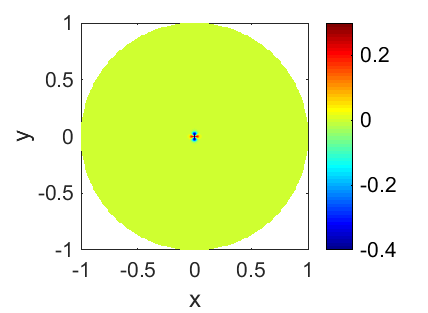}
	\caption{$Q_{11}(\rvec,t)-\frac{B}{3C}(\frac{x^2}{r^2}-1/3)$ on the cross section of the cylinder at $z=0$ for the initial condition \eqref{eq:c14c} at $t=0$, $t=0.06$ and $t=0.2$.}
	\label{nonradialuvQ11}
\end{figure}

Finally, we consider a small perturbation to the planar $(u,v)$-initial conditions, that render non-planar initial conditions. Let
\begin{equation}
\label{perturbedIC}
\bold{Q}(r,0)=u(r,0)\left(\bold{n} \otimes \bold{n}-\frac{\boldsymbol{I}_2}{2}\right)+v(r,0)\left(\bold{p}\otimes \bold{p}-\frac{\boldsymbol{I}}{3}\right),
\end{equation}
where,
\begin{align}
&\boldsymbol{n}=(\sqrt{(1-\epsilon^2(1-r)^2)}\cos\theta,\sqrt{(1-\epsilon^2(1-r)^2)}\sin\theta, \epsilon(1-r)),\nonumber\\
&\mathbf{I}_2 = \nvec_1 \otimes \nvec_1 + \mvec\otimes \mvec,\nonumber
\end{align}
and $\boldsymbol{p}$ is as before. The functions $u(r,0)$ and
$v(r,0)$ are as defined previously. As before, the dynamic
solution quickly becomes uniaxial (within numerical resolution)
irrespective of $u_0$ and $v_0$ and develops a well-defined interface
separating an interior region with $\Qvec=0$ near $r=0$, from an ordered uniaxial nematic
state elsewhere. This interface propagates inward but instead of being arrested at a small distance from the origin,  the interface collapses at the origin and the dynamic solution relaxes to $\Qvec_2$ in (\ref{eq:c8}). In particular, $|\Qvec(\rvec, t)|^2 \to \frac{2}{3}h_+^2$ uniformly on the cylinder for large times. 
Figures \ref{Biaxiality4} and \ref{Biaxiality409} show the time snapshots of the spatial distribution of eigenvalues, and the time 
evolution of the eigenvalues at $r=0$, showing the convergence to a uniform uniaxial solution throughout the cross-section.  
Figure \ref{Q1ComparisonP1} shows snapshots of the relaxation of one component of $\Qvec$ to the corresponding $\Qvec_2$-component. 

We also study how the initial non-planarity (as measured by $\epsilon$) affects the characteristic relaxation time to $\Qvec_2$.
We observe that the modulus, $|\Qvec|^2\left(0, t \right)$ jumps abruptly from zero to
$\frac{2}{3}h_+^2$ at some critical time. Let $t^*$ be the first
time for which
$$ \left|\Qvec\left(r, t^* \right)\right|^2 > \frac{1}{3} h_+^2, $$
and we associate $t^*$ with the loss of interface structure. Figure \ref{Interfacelost} plots $t^*$ as a function of 
$-\log_{10} \epsilon$ for various $u_0$ and $v_0$, and we find that $t^*\propto -\log_{10}\epsilon$. This can give quantitative estimates for the real-time persistence of isotropic-nematic interfaces and their experimental relevance for model problems with non-planar initial conditions.

\begin{figure}
\centering
\centerline{\includegraphics[width=4.7cm]{EigenvaluesInitialCU06andV04.png}\includegraphics[width=4.7cm]{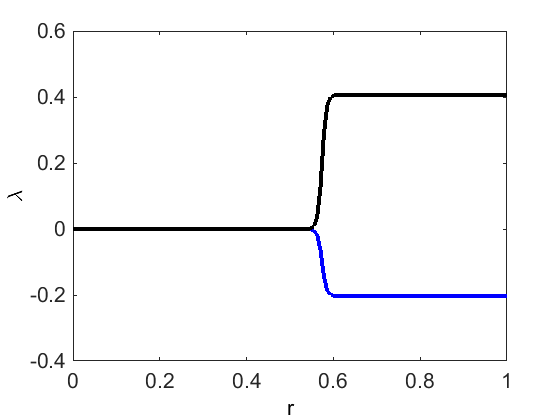} \includegraphics[width=4.7cm]{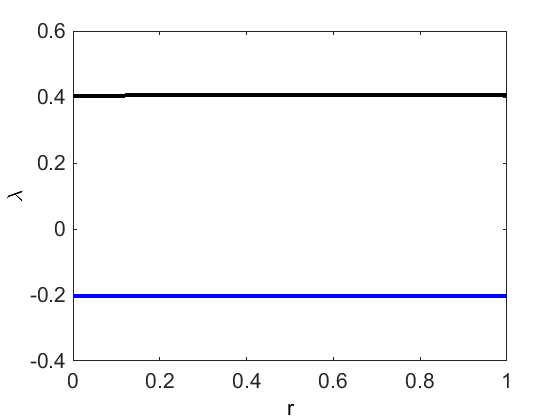}}
\caption{Spatial dependence of eigenvalues of $\boldsymbol{Q}(\mathbf{r},t)$ at  $t=0$, $t=0.001$ and $t=0.25$ for 
 initial condition \eqref{perturbedIC} with $u_0=0.6$, $v_0=0.4$.} 
\label{Biaxiality4}
\end{figure}

\begin{figure}
\centering
\includegraphics[width=3cm]{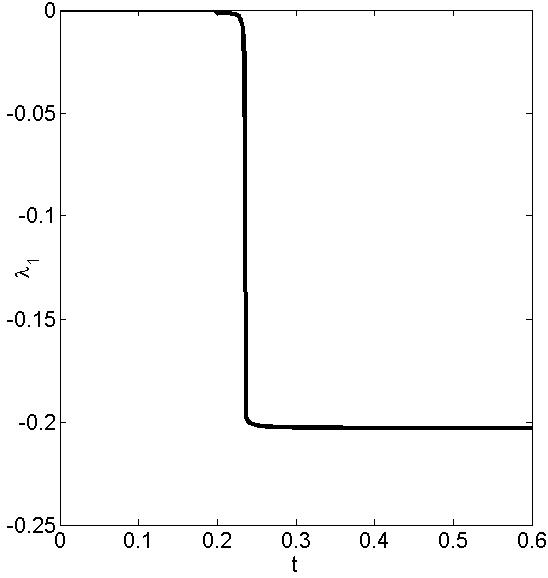}\quad \includegraphics[width=3cm]{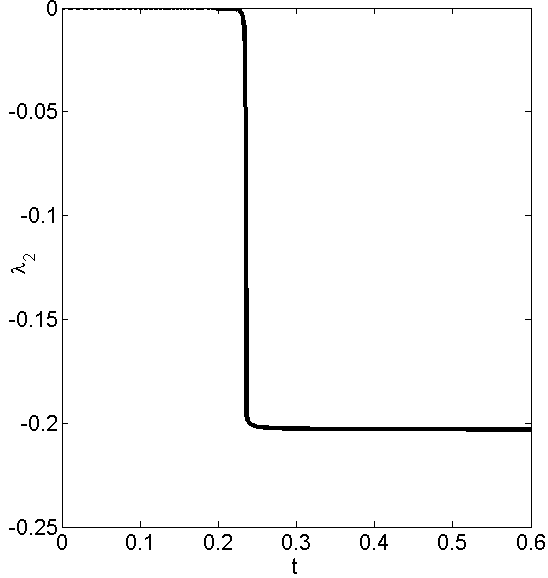}\quad \includegraphics[width=3cm]{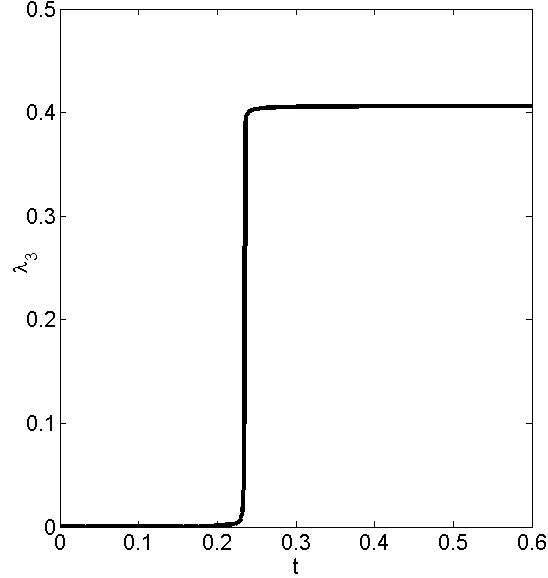}
\caption{Time evolution of eigenvalues of $\boldsymbol{Q}(\rvec,t)$ at the origin for initial condition  \eqref{perturbedIC} with 
$u_0=0.6$, $v_0=0.4$. }
\label{Biaxiality409}
\end{figure}

\begin{figure}
\centering
\includegraphics[width=4.3cm]{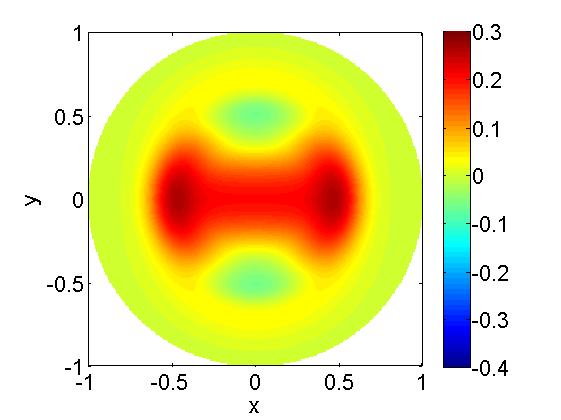}\includegraphics[width=4.3cm]{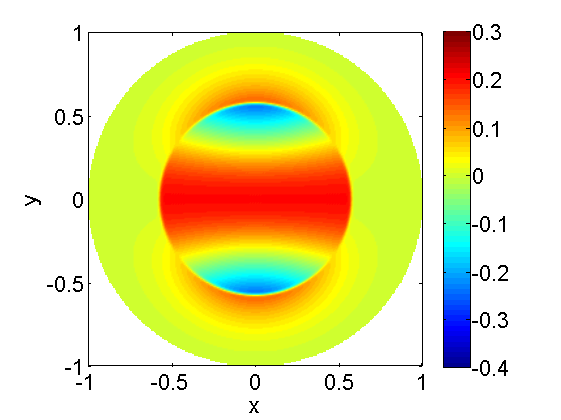}\includegraphics[width=4.3cm]{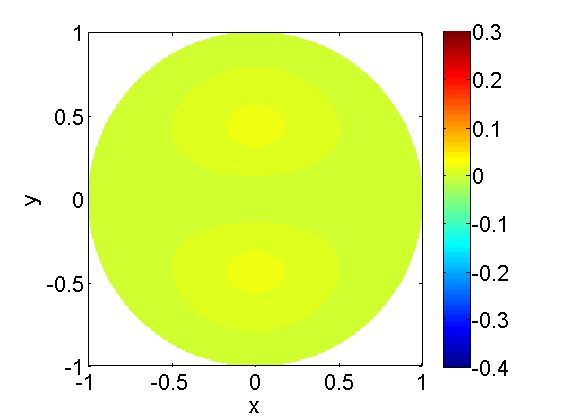}
\caption{$Q_{11}(\rvec,t)-\frac{B}{3C}(4x^2/(1+r^2)^2-1/3)$ on the cross section of the cylinder at $z=0$  for initial condition \eqref{perturbedIC} with $u_0 = 0.6$, $v_0 = 0.4$ at
$t=0$, $t=0.001$ and $t=0.6$. } \label{Q1ComparisonP1}
\end{figure}

\begin{figure}
	\centering
	\includegraphics[width=6cm]{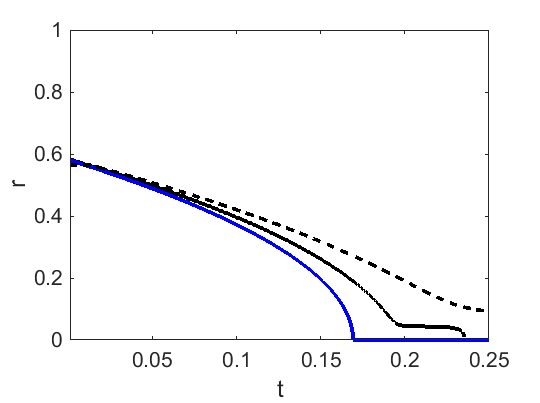}
	\caption{Interface position of dynamic solution (\ref{eq:c14bb2}) with initial condition (\ref{eq:c14}), with $u_0 = 0.6$, $v_0 = 0.4$ for $\tilde{L}=0.05$ (grey) and $\tilde{L}=0.01$ (black) compared to motion by mean curvature (blue).} 
	\label{uvinterface}
\end{figure}

\begin{figure}
\centering
\includegraphics[width=7cm]{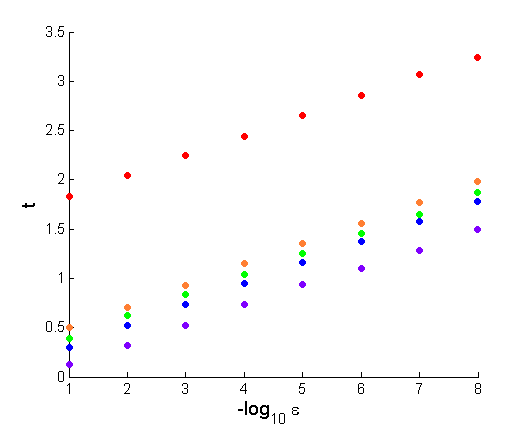}
\caption{Time ($t^*$) at which the interface is lost for various $\epsilon$ and $u_0$ and $v_0$; $u_0=0.1$ (purple), $u_0=0.4$ (blue), $u_0=0.5$ (green), $u_0=0.6$ (orange) and $u_0=0.9$ (red), $v_0=1-u_0$ for initial condition \eqref{perturbedIC} and $\tilde{L}=0.05$. }
\label{Interfacelost}
\end{figure}

%

\section{Biaxial boundary conditions on a disc}
\label{sec:2D}
The Dirichlet conditions in Sections~\ref{sec:RH} and \ref{sec:cylinder} are uniaxial minima of the bulk potential, $f_B$ and are referred to as minimal boundary conditions. In this section, we employ a biaxial planar boundary condition that is not a minimum of $f_B$ at the nematic-isotropic transition temperature, given by
\begin{equation}
\label{eq:b1new}
\Qvec_b = \frac{B}{3C}\left(\nvec_1\otimes \nvec_1 - \mvec\otimes \mvec \right).
\end{equation}
 This boundary condition is maximally biaxial with a zero eigenvalue i.e. $\textrm{tr} \Qvec_b^3 = 0$. We refer to $\Qvec_b$ as a non-minimal Dirichlet condition. From \cite{shafrir}, we expect that LdG energy minimizers, subject to a boundary condition of this form, converge in an appropriate sense, to a limiting harmonic map that is a minimum of the bulk potential almost everywhere and develop a boundary layer near $r=1$ to match $\Qvec_b$, in the vanishing elastic constant limit.

We study two dimensional (2D) and 3D dynamic solutions separately. A 2D solution is a symmetric and traceless $2\times 2$ matrix \cite{pre2012} and in such cases, we study maps from the disc to a 2D target space, with just two degrees of freedom. A 3D solution is a symmetric, traceless $3\times 3$ matrix and in such cases, we study maps from a 2D domain into a five-dimensional target space.

We start this section with a discussion of the 2D case; 2D $\Qvec$-matrices have $\textrm{tr}\Qvec^3=0$ and the corresponding evolution law simplifies to
\begin{equation}
\label{eq:b1b}
\Qvec_t = L \Delta \Qvec - A \Qvec - C |\Qvec|^2 \Qvec.
\end{equation}
The simplest 2D dynamic solution, consistent with (\ref{eq:b1new}), is
\begin{equation}
\label{eq:b2}
\Qvec(\rvec, t) = s(r,t) \left(\nvec_1\otimes \nvec_1 - \mvec\otimes \mvec \right)
\end{equation}
with $r^2 = x^2 + y^2$. It is simple to check that the gradient flow model (\ref{eq:b1b}) admits a solution of the form (\ref{eq:b2}) if the function $s(r,t)$ is a solution of
\begin{equation}
\label{eq:b3} \gamma s_t = \left\{s_{rr} + \frac{s_r}{r} -
\frac{4s}{r^2} \right\} - \frac{s}{L}\left( \frac{B^2}{27 C} + 2C
s^2 \right)
\end{equation}
with fixed boundary conditions
\begin{equation}
\label{eq:b4}
s(0, t) = 0 \quad s(1, t)=\frac{B}{3C}
\end{equation} for all $t\geq 0$. 
The evolution equation (\ref{eq:b3}) is simply the gradient flow model for the functional
\begin{equation}
\label{eq:b4b}
I[s]: = \int_{0}^{1}\left[ r \left(\frac{ds}{dr}\right)^2 + \frac{ 2s^2}{r} \right] + \frac{r}{L}\left(\frac{B^2}{27 C} s^2 + \frac{C}{4}s^4 \right)~\mathrm{d}r
\end{equation} 
and given a smooth solution, $s(r,t)$ of (\ref{eq:b3})-(\ref{eq:b4}) with suitable initial conditions, (\ref{eq:b2}) is the unique 2D solution. Further, the 2D potential has an isolated minimum at $s=0$ (see (\ref{eq:b4b})) and hence we expect that any dynamic solution of (\ref{eq:b1b}) has an outward-propagating interface that separates an almost isotropic core around $r=0$ from the Dirichlet boundary condition at $r=1$.
The interface equilibrates near $r=1$, followed by a sharp boundary layer to match the fixed boundary condition.

Next, we present some heuristics for 3D dynamic solutions with a planar initial condition of the
form
\begin{equation}
\label{eq:b4c}
\Qvec(\rvec, 0) = s\left(r, 0 \right) \left(\nvec_1\otimes \nvec_1 - \mvec\otimes \mvec \right),
\end{equation}
where $s(r,0)$ has an interface structure i.e.
\begin{equation}
s(r, 0) = \begin{cases} 0 & r < r_0 \\
\frac{B}{3C} & r_0 < r \leq 1
\end{cases}
\end{equation} 
for some $r_0 \in \left(0, 1 \right)$. Based on the
analysis in Section~\ref{sec:uv}, all dynamic solutions remain planar with $Q_{13} = Q_{23}=0$. Hence, we expect $\Qvec(\rvec, t)$ (for long
times) to have an isotropic core at $r=0$ and  $\Qvec(\rvec, t)$ converges to $\Qvec_1$ (defined in
(\ref{eq:c8})) everywhere away from $r=0$, with a boundary layer near $r=1$ to match the
Dirichlet condition. However, we speculate that there is a second scenario for non-minimal boundary conditions as in (\ref{eq:b1new}) which is not observed for minimal boundary conditions as in Section~\ref{sec:cylinder}. If $1 - r_0$ is sufficiently small, 3D solutions may exhibit an outward growing isotropic core since the isotropic phase is also a minimizer of the bulk
potential and this scenario may be energetically favourable.
All dynamic solutions with non-minimal boundary conditions develop a boundary layer near $r=1$, which has an energetic cost. However, solutions with minimal boundary conditions (as in Sections~\ref{sec:RH} and \ref{sec:cylinder}) do not have boundary layers near $r=1$ and in such cases, it is energetically preferable to either have a localized core of reduced order near $r=0$ (as for planar 3D solutions)
or to have uniform order throughout the disc (as for non-planar 3D solutions). 

Thus, we expect sharp contrast in the behaviour of 2D and 3D
solutions. The 2D dynamic solutions have little
nematic order being largely isotropic or close to isotropic, except near $r=1$
and 3D dynamic solutions are largely uniaxial
with perfect nematic ordering (for at least a range of values of
$r_0$), except near $r=0$ and $r=1$.  It is unknown whether 2D dynamic solutions could be
physically relevant for severely confined cylinders and if so,
what happens at the critical transition between 2D and 3D
dynamic solutions. In the next section, we present numerical
results to validate our heuristics above.

\subsection{Numerical simulations with biaixial boundary conditions}
We numerically solve (\ref{eq:n1}-\ref{eq:n5}) on a disc with the Dirichlet condition (\ref{eq:b1new}) on $r=1$. All other parameter values are as in Section~\ref{sec:dynamic}, with $\tilde{L}=0.01$.
The planar initial condition is as in \eqref{eq:b4c} 
with $s(r)=\frac{h_+}{2}\left(1+\tanh \left(\frac{r-r_0}{\sqrt{\tilde{L}}} \right)\right)$.
In the case of $r_0=0.5$, the solution quickly becomes almost uniaxial in the interior by developing an inwards-propagating well-defined isotropic-nematic interface. This is illustrated by a plot of eigenvalue evolution in Figure \ref{BiaxialityCL}. The solution converges to $\Qvec_1(\rvec, t)$ (see Equation(\ref{eq:c8})) away from $r=0$ for long times, modulo the core of reduced order near $r=0$ and 
a thin boundary layer near $r=1$, as displayed in  Figure \ref{Q1ComparisonCLR}. This is as expected from the numerical results presented in Section~\ref{sec:cylinder}.

Next, we consider $r_0=0.92$ and observe a different behaviour; the interface evolves so there is a thin boundary layer near $r=1$ with a large almost isotropic core in the interior. This is best illustrated with radial profiles of $|\boldsymbol{Q}|^2$ as seen in Figure \ref{RadialprofileCL}.
%
\begin{figure}
\centering
\centerline{\includegraphics[width=4.7cm]{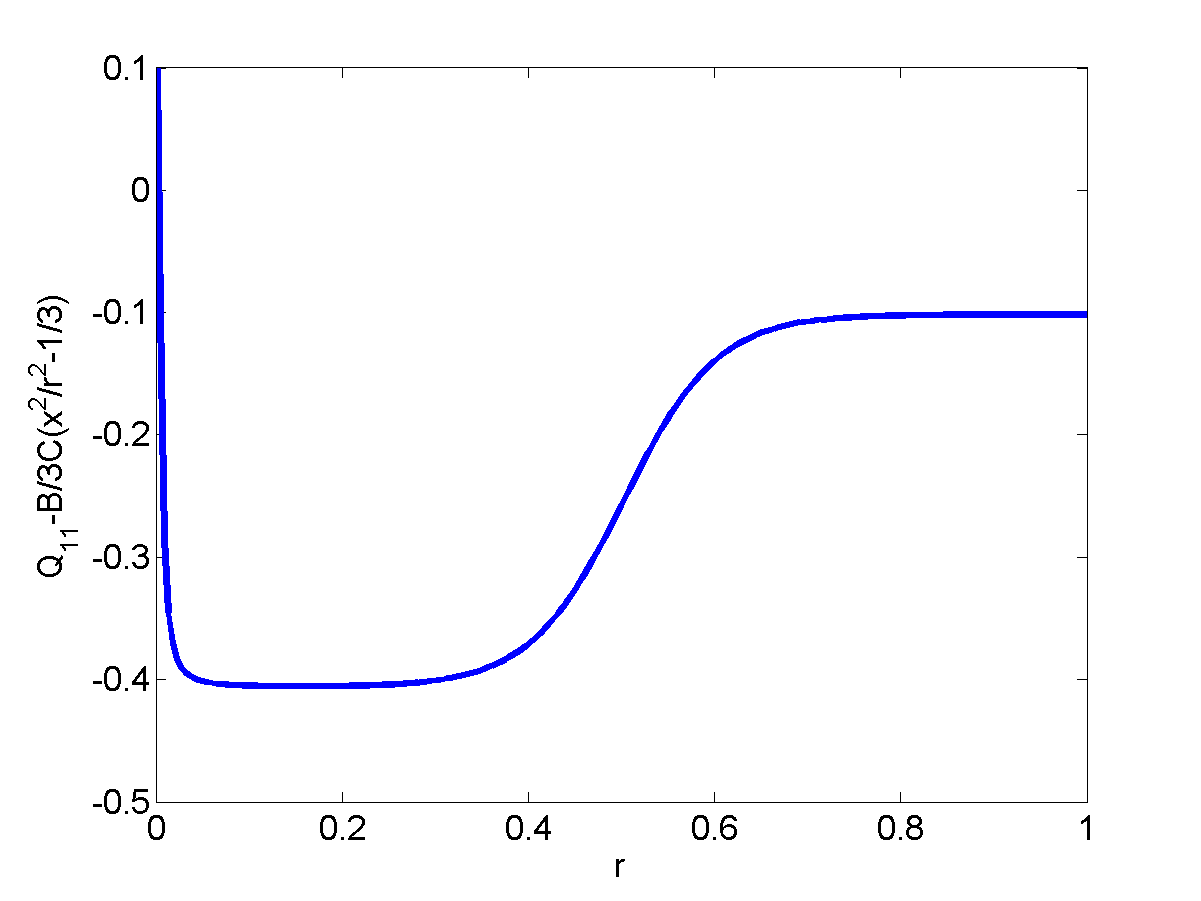}\includegraphics[width=4.7cm]{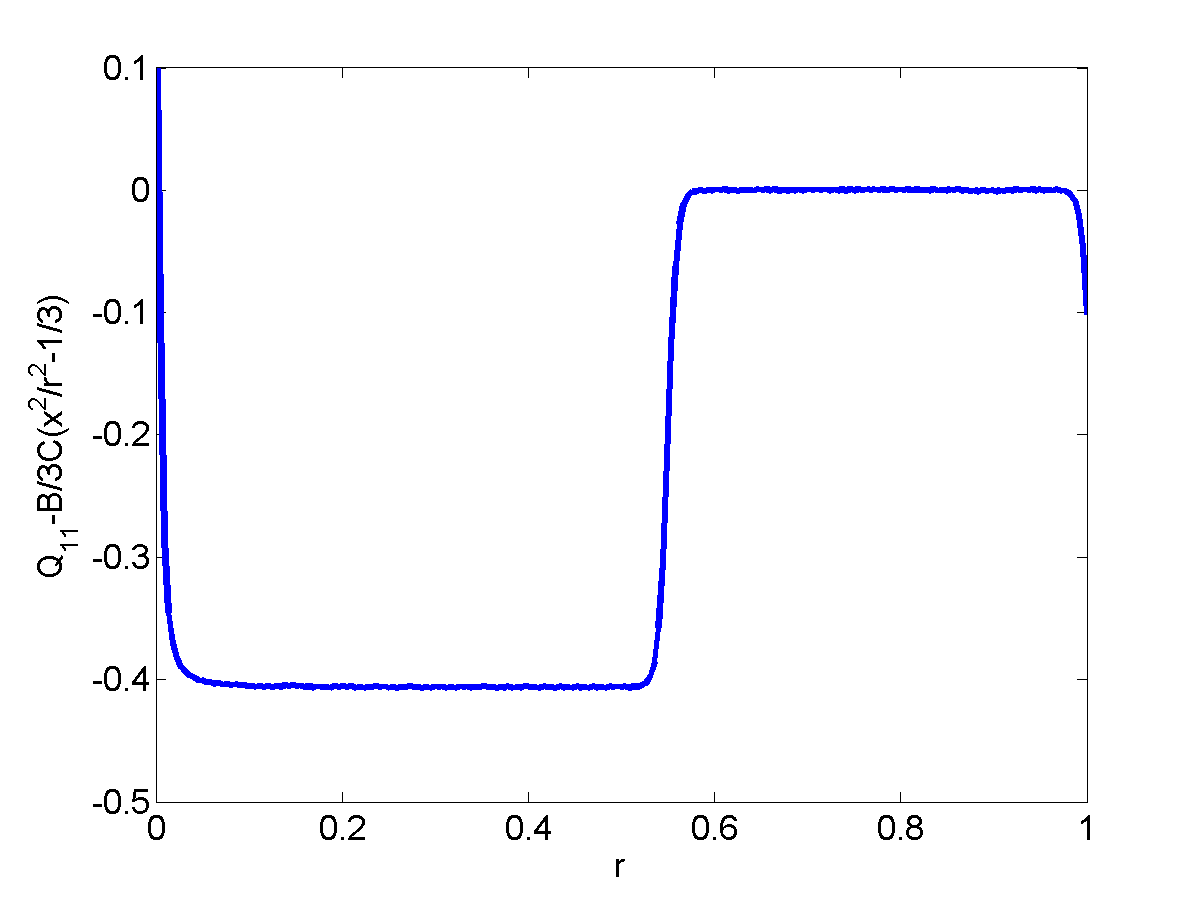}\includegraphics[width=4.7cm]{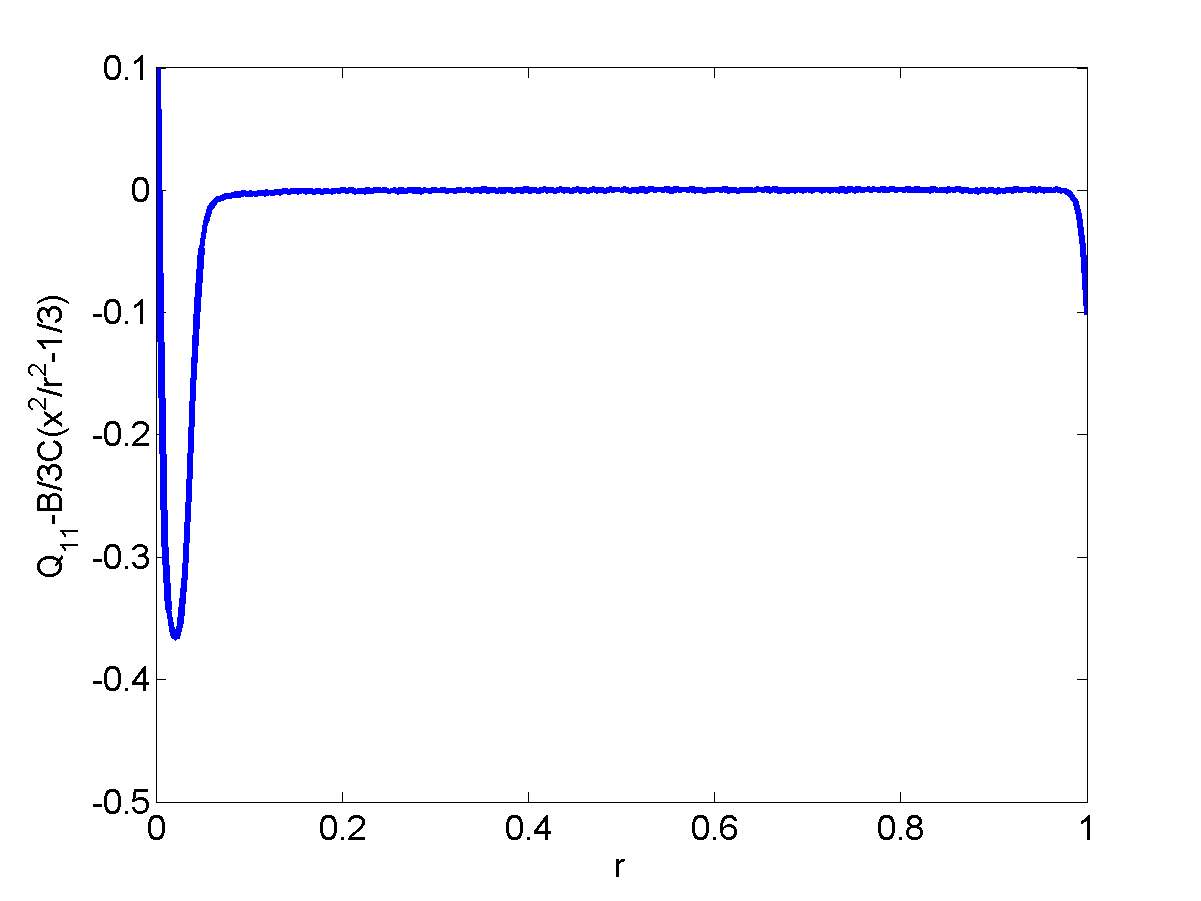}}
\caption{Radial profile of $Q_{11}(\rvec,t)-\frac{B}{3C}(\frac{x^2}{r^2}-1/3)$ for $\theta=0$ for initial
condition \eqref{eq:b4c} (with $r_0 = 0.5$), at $t=0$, $t=0.001$ and $t=0.25$.}
\label{Q1ComparisonCLR}
\end{figure}
\begin{figure}
\centering
\centerline{\includegraphics[width=4.7cm]{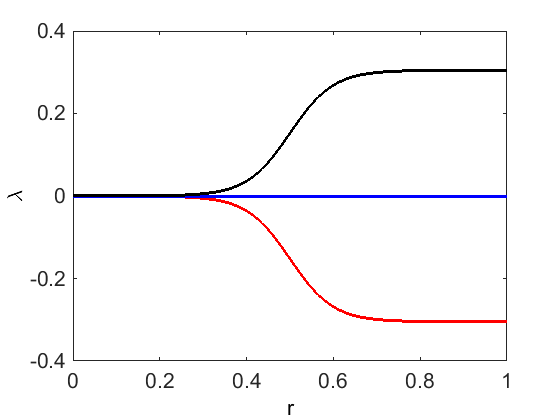}\includegraphics[width=4.7cm]{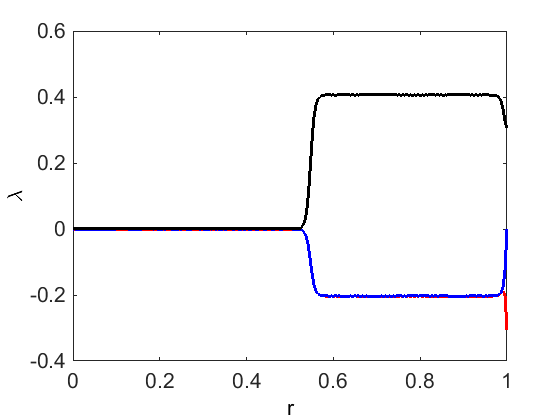}\includegraphics[width=4.7cm]{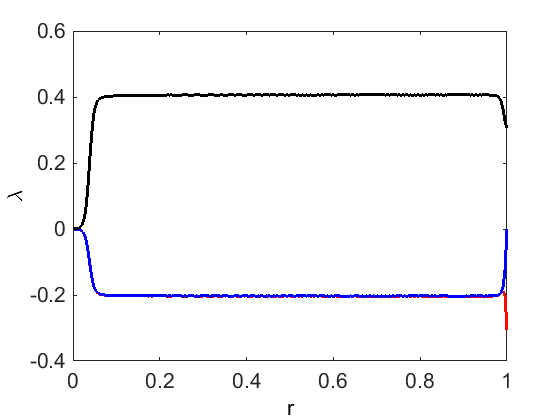}}
\caption{Eigenvalues of $\boldsymbol{Q}(\rvec,t)$ for initial
condition \eqref{eq:b4c} (with $r_0 = 0.5$), at $t=0$, $t=0.001$ and $t=0.25$.}
\label{BiaxialityCL}
\end{figure}
\begin{figure}
\centering
\centerline{\includegraphics[width=5cm]{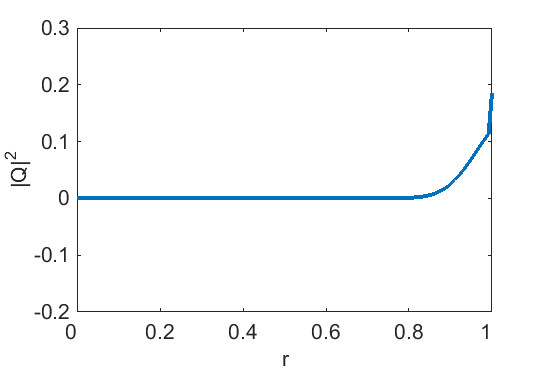}\includegraphics[width=5cm]{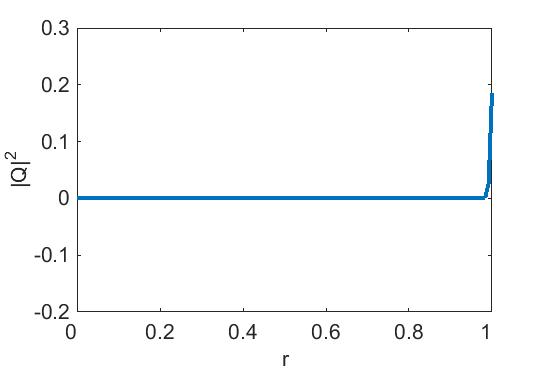}}
\caption{Radial profile of $|\boldsymbol{Q}(\rvec,t)|^2$ for initial
condition \eqref{eq:b4c} with $r_0=0.92$, for $t=0$ and $t=0.25$.}
\label{RadialprofileCL}
\end{figure}

We compare the 3D solutions above with 2D solutions for
the same system (\ref{eq:b1b}). We work with planar initial conditions \eqref{eq:b4c},
with $s(r)$ as before. The initial interface grows rapidly to yield an
almost entirely isotropic interior with a thin boundary layer near
$r=1$. This is illustrated by the eigenvalue evolution in Figure
\ref{DiscEvals}, thus corroborating our heuristics and analytical reasoning in the previous section.
\begin{figure}
\centering
\centerline{\includegraphics[width=5cm]{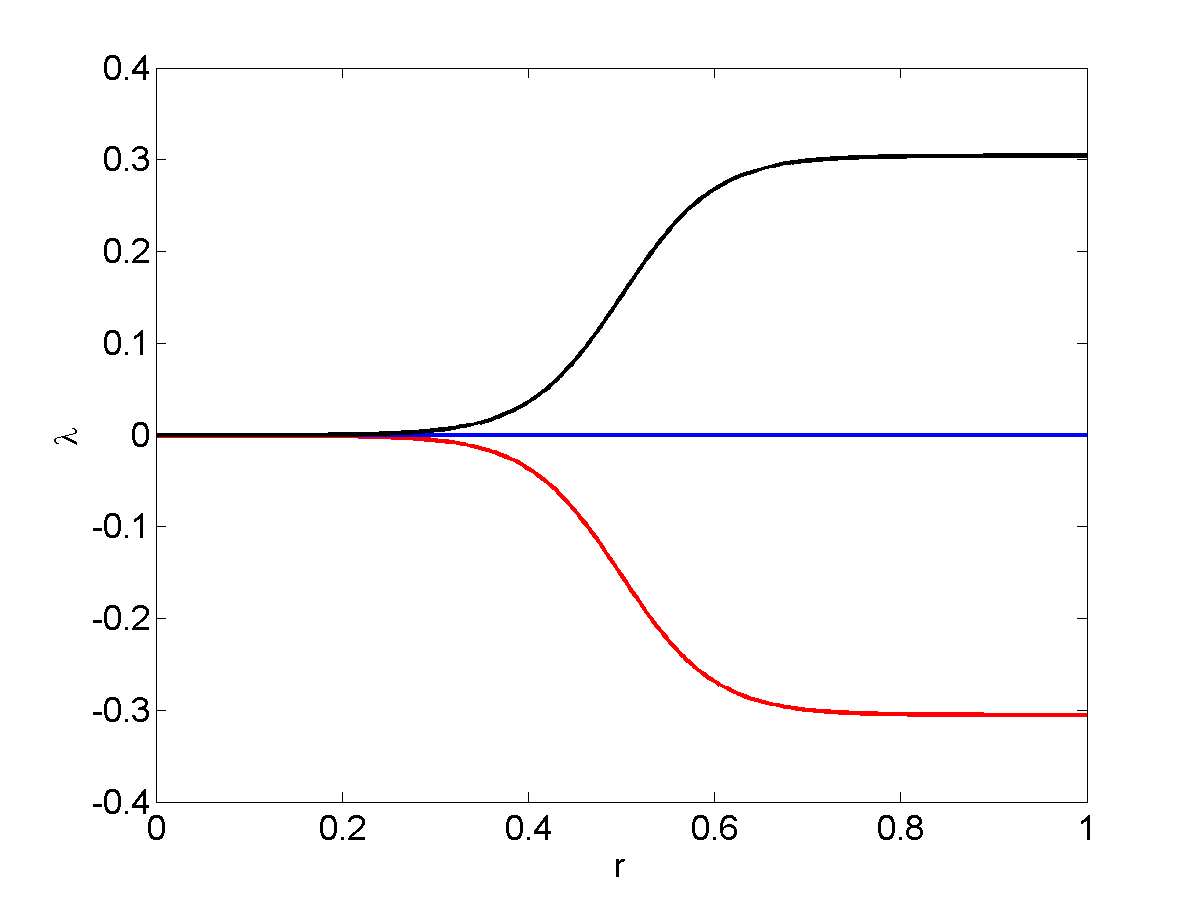}\includegraphics[width=5cm]{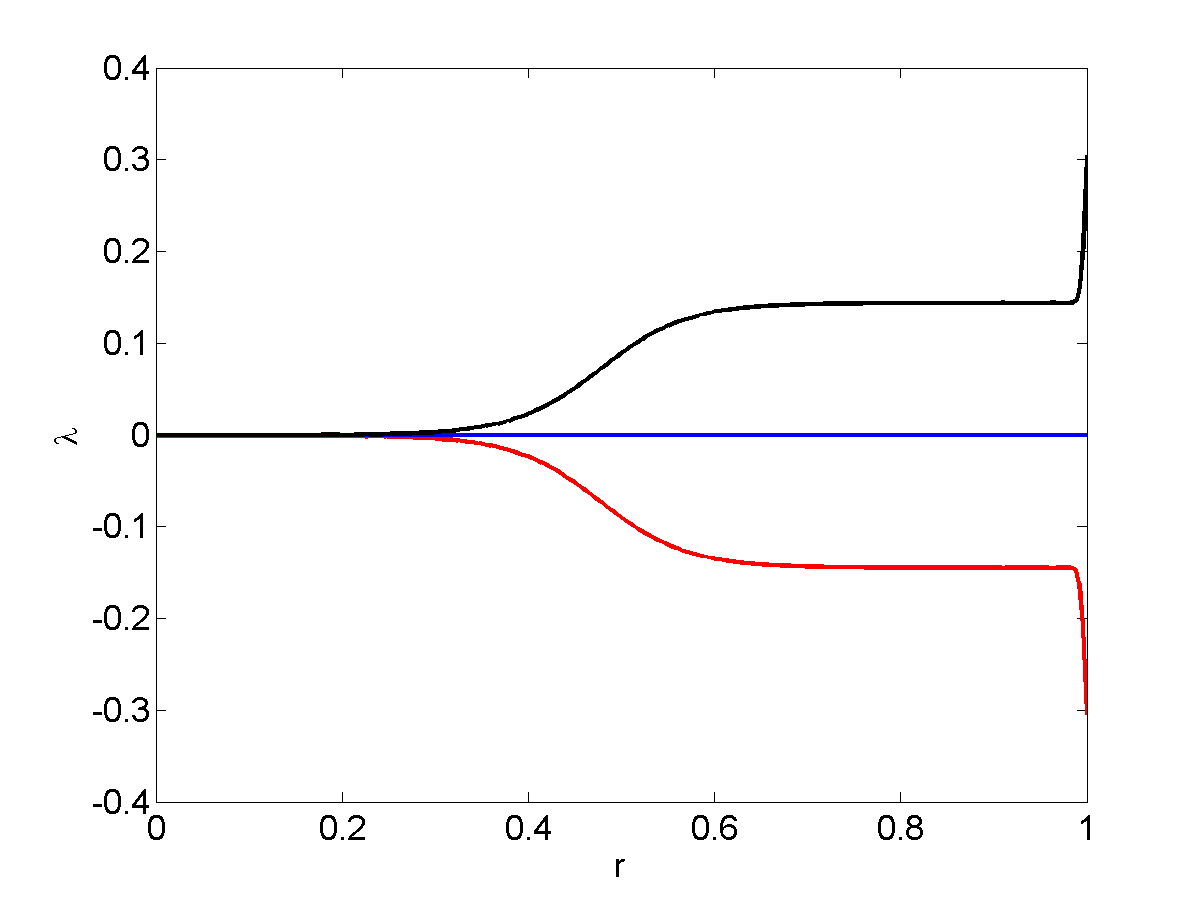}\includegraphics[width=5cm]{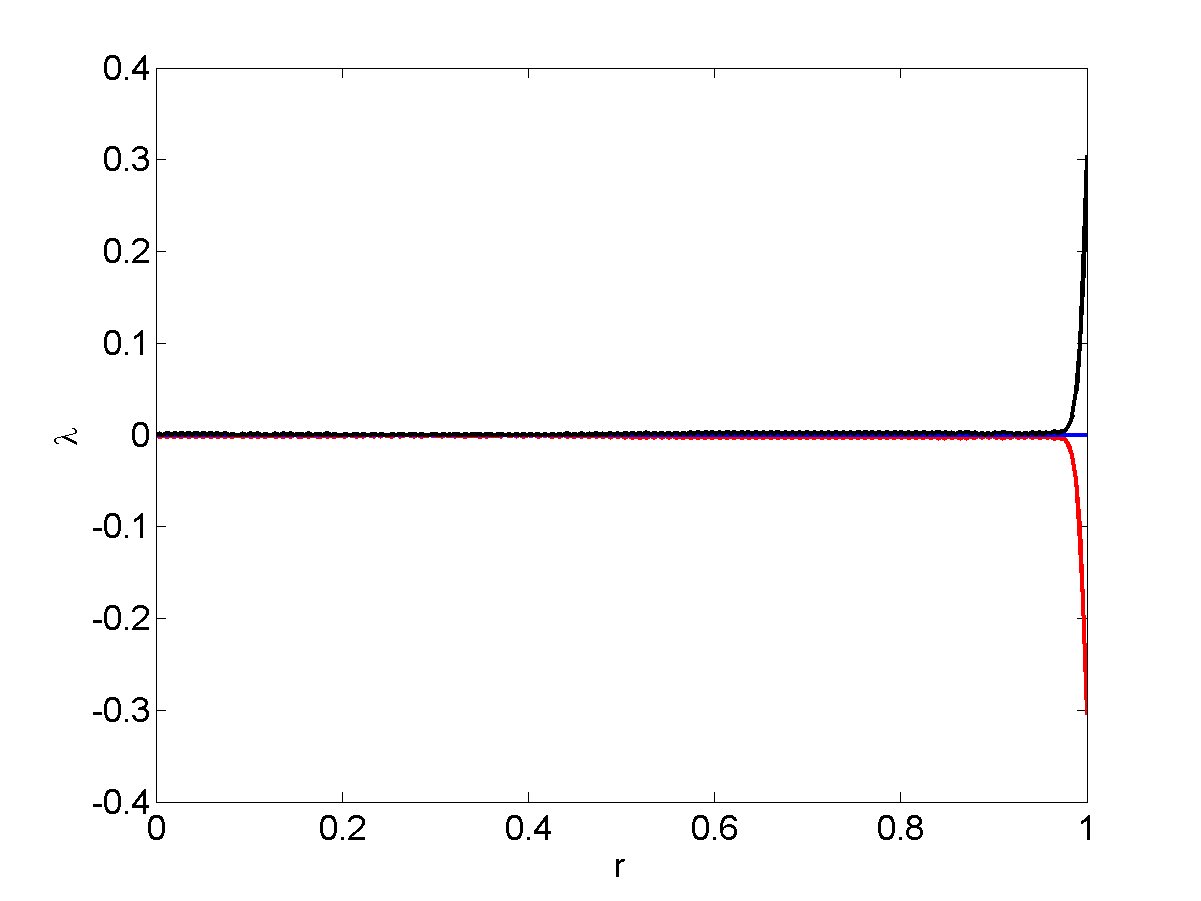}}
\caption{Eigenvalues of $\Qvec(\rvec,t)$ in the 2D model, for initial condition
\eqref{eq:b4c}, at $t=0$, $t=2 \times 10^{-5}$ and $t=2 \times10^{-4}$.} \label{DiscEvals}
\end{figure}
\section{Conclusions}
\label{sec:conclusions}
We focus on the gradient flow model for the LdG energy on prototype geometries, such as a droplet and a disc, with Dirichlet boundary conditions and various initial conditions at the nematic-isotropic transition temperature. In Section~\ref{sec:RH}, we consider the model problem of a 3D droplet of radius $R$, with radial boundary conditions. In the case of uniaxial radially symmetric initial conditions with an isotropic-nematic front structure, we adapt Ginzburg-Landau methods from \cite{bronsardkohn, bronsardstoth} to prove that the isotropic-nematic interface propagates according to mean curvature in the $\frac{L}{R^2}\frac{C}{B^2} \to 0$ limit. However, the qualitative dynamics seem universal for a large class of radially symmetric and non-symmetric uniaxial and biaxial initial conditions and the long-time dynamics are determined by the classical RH solution, which has been numerically demonstrated to be a global LdG energy minimizer in this regime.

In Sections~\ref{sec:cylinder} and \ref{sec:2D}, we focus on dynamic solutions on a disc.
Our results are largely numerical and complemented
by heuristics and analytical reasoning. We demonstrate how a choice of planar or non planar initial condition can influence the long-time dynamic behaviour. Planar initial conditions generate planar dynamic solutions with an isotropic core at the centre for all times whereas non-planar solutions follow the planar dynamics for a length of time, before 
relaxing into an uniaxial state of perfect order for long times. In Section~\ref{sec:2D}, we look at 
non-minimal boundary conditions. Non-minimal boundary conditions allow for dynamic scenarios outside the scope of minimal boundary conditions and since minimal boundary conditions are an idealization, non-minimal Dirichlet conditions can be physically relevant too. 

The long-time dynamics can be understood in terms of local and global minimizers, or in some cases critical points, of the LdG energy. 
In cases where the LdG critical points exhibit an isotropic-nematic interface, this interface may be localized with little effect on global properties. Our numerical results show that a large class of physically relevant LC model problems can exhibit a well-defined isotropic-nematic interface for a length of time (see Figures~\ref{InterfaceEvolution} and \ref{uvinterface}) and these results give insight into how boundary and initial conditions can be used to yield either largely disordered or ordered nematic profiles. A natural next step is to rigorously analyze front formation and propagation with generic non-minimal boundary conditions and with more general LdG energy functionals, including those with a sixth order bulk potential that allow for biaxial minima. 
We will report on these developments in the future.

\section{Acknowledgements}
A.M. is supported by an EPSRC Career Acceleration Fellowship EP/J001686/1 and EP/J001686/2 and an OCIAM Visiting Fellowship. P.A.M. gratefully acknowledges support from a Royal Society Wolfson award. A.S. is supported by an Engineering and Physical Sciences Research Council (EPSRC) studentship. We thank Heiko Gimperlein and Giacomo Canevari for helpful comments.
\bibstyle{plain}
\bibliography{frontsv19}
\end{document}